\def\ps@pprintTitle{%
 \let\@oddhead\@empty
 \let\@evenhead\@empty
 \def\@oddfoot{}%
 \let\@evenfoot\@oddfoot}
\let\svthefootnote\thefootnote
\newcommand\blankfootnote[1]{%
  \let\thefootnote\relax\footnotetext{#1}%
  \let\thefootnote\svthefootnote%
}
\let\svfootnote\footnote
\renewcommand\footnote[2][?]{%
  \if\relax#1\relax%
    \blankfootnote{#2}%
  \else%
    \if?#1\svfootnote{#2}\else\svfootnote[#1]{#2}\fi%
  \fi
}
\DeclareSymbolFont{bbold}{U}{bbold}{m}{n}
\DeclareMathSymbol{\bbone}{\mathord}{bbold}{49}    % mathbb 1
\DeclareMathSymbol{\weight}{\mathord}{bbold}{"77}  % mathbb w
\theoremstyle{plain}% default
\newtheorem{theorem}{Theorem}[section]
\newtheorem{corollary}{Corollary}[section]
\newtheorem{lemma}{Lemma}[section]
\newtheorem{proposition}{Proposition}[section]
\newtheoremstyle{bodyfontup}
    {\dimexpr\topsep/2\relax} % space above
    {\dimexpr\topsep/2\relax} % space below
    {}          % body font
    {}          % indent amount
    {\bfseries} % theorem head font
    {.}         % punctuation after theorem head
    {.5em}      % space after theorem head
    {}          % theorem hed spec. (empty = "normal")
\theoremstyle{bodyfontup}
\newtheorem{remark}{Remark}[section]
\newtheorem{example}{Example}[section]
\renewenvironment{proof}[1][\proofname]{%
   \par\pushQED{\qed}\normalfont%
   \topsep6\p@\@plus6\p@\relax
   \trivlist\item[\hskip\labelsep\bfseries#1\@addpunct{.}]%
   \ignorespaces
}{%
   \popQED\endtrivlist\@endpefalse
}
\newcommand{\CC}{\mathbb{C}}
\newcommand{\RR}{\mathbb{R}}
\newcommand{\NN}{\mathbb{N}}
\newcommand{\ZZ}{\mathbb{Z}}
\newcommand{\Zpl}{\ZZ_+}
\newcommand{\abs}[1]{\lvert#1\rvert}           % absolute value 
\newcommand{\Abs}[1]{\big\lvert#1\big\rvert}   % absolute value (big)
\newcommand{\ABS}[1]{\Big\lvert#1\Big\rvert}   % absolute value (Big)
\newcommand{\card}[1]{\abs{#1}}                % cardinality of a set
\newcommand{\coeff}[2]{\mathrm{Coeff}(#1,#2)}
\newcommand{\Coeff}[2]{\mathrm{Coeff}\Bigl(#1,#2\Bigr)}
\renewcommand{\geq}{\geqslant}
\renewcommand{\leq}{\leqslant}
\newcommand{\norm}[1]{\|#1\|}
\newcommand{\per}{\mathrm{per}}  % permanent of square matrix
\newcommand{\set}[1]{\underline{#1}}
\newcommand{\vecsum}[1]{\abs{#1}} 
\newcommand{\binomial}[2]{\genfrac{(}{)}{0pt}{}{#1}{#2}}
\newcommand{\newatop}[2]{\genfrac{}{}{0pt}{}{\scriptstyle #1}{\scriptstyle #2}}
\begin{document}
%%%%%%%%%%%%%%%%%%%%%%%%%%%%%%%%%%%%%%%%%%%%%%%%%%%%%%%%%%%%%%%%%%%%%%
\begin{frontmatter}

\title{Generalization of a Hadamard type \\
inequality for 
permanents\texorpdfstring{\footnote[]{\textcopyright\ 2019. This 
manuscript version is made available under the CC-BY-NC-ND 4.0 
license \\
\url{https://creativecommons.org/licenses/by-nc-nd/4.0/}}}{}
\texorpdfstring{\footnote[]{The published version is available at 
\url{https://doi.org/10.1016/j.laa.2019.02.015}}}{}}

\author{Bero Roos}

\ead{bero.roos@uni-trier.de}

\address{FB IV -- Mathematics, 
University of Trier,
54286 Trier,
Germany\medskip\\ \normalsize \textup{February 27, 2019}}

\begin{abstract}
This paper is devoted to a generalization of a Hadamard type 
inequality for the permanent of a complex square matrix. 
Our proof is based on a non-trivial extension of a technique
used in Carlen, Lieb and Loss (Methods and Applications of Analysis 
13 (1) (2006) 1--17). We give an application to coefficients of 
products of linear forms and show some auxiliary inequalities, 
which might be of independent interest. 
\end{abstract}

\begin{keyword}% in alphabetical order
coefficients of linear forms \sep
elementary symmetric polynomials \sep
multisymmetric polynomials\sep
permanent inequality

\MSC[2010] 
15A15 \sep    % Determinants, permanents, other special matrix functions
15A45         % Miscellaneous inequalities involving matrices
\end{keyword}

\end{frontmatter}

%%%%%%%%%%%%%%%%%%%%%%%%%%%%%%%%%%%%%%%%%%%%%%%%%%%%%%%%%%%%%%%%%%%%%%
\section{Introduction and the main result}
%%%%%%%%%%%%%%%%%%%%%%%%%%%%%%%%%%%%%%%%%%%%%%%%%%%%%%%%%%%%%%%%%%%%%%
In a paper of 1893, \citet{hadamard1893} proved an inequality for 
the determinant of a complex $n\times n$ matrix $Z=(z_{j,r})$, which 
states that
\begin{align}\label{e5174597}
\abs{\det(Z)}
\leq \prod_{r=1}^n\Bigl(\sum_{j=1}^n\abs{z_{j,r}}^2\Bigr)^{1/2},
\end{align}
that is, the modulus of the determinant of $Z$ is bounded from above 
by the product of the Euclidean norms of all column vectors of 
$Z$. Nowadays, there are several comparatively simple proofs of 
\eqref{e5174597} available because of the nice properties of the 
determinant, e.g.\ see \citet{MR2978290}. 
The permanent, however, defined as a kind of ``sign-less'' 
determinant, does not share many properties of the determinant, 
such as the compatibility with matrix multiplication. 
Therefore, it is not so surprising that a Hadamard type inequality 
for permanents was found much later, in 2006, see \citet{MR2275869} 
and~\citet{MR2256997}.

To be more precise, we need some notation. 
For arbitrary sets $A$ and $B$, let $A^B$ denote the set of all maps 
$f:\,B\longrightarrow A$ and define 
$A_{\neq}^B=\{f\in A^B\,|\, f \mbox{ is injective}\}$. 
If $f\in A^B$ and $b\in B$, we write $f_b=f(b)$. 
For $n\in\NN=\{1,2,3,\dots\}$, let $\set{n}=\{1,\dots,n\}$, 
$A^n=A^{\set{n}}=\{(a_1,\dots,a_n)\,|\,a_1,\dots,a_n\in A\}$,
and $A_{\neq}^n=A_{\neq}^{\set{n}}$. In particular, 
$\set{n}_{\neq}^n$ is the set of all permutations on $\set{n}$. 
Let $\RR$ (resp.\ $\CC$) be the set of all real (resp.\ complex) 
numbers. 
If two sets $J$ and $R$ have the same finite cardinality 
$\card{J}=\card{R}\in\Zpl=\NN\cup\{0\}$, the permanent $\per(Z)$ of 
a matrix $Z=(z_{j,r})\in\CC^{J\times R}$ can be defined as
the sum of all diagonal products of $Z$, i.e.\ $\per(Z)$ is
the sum of products $\prod_{j\in J}z_{j,r_j}$ over all 
$r\in R_{\neq}^J$. 
In the special case $J=R=\emptyset$, this gives $\per(Z)=1$,
since empty products are defined as~$1$. 
For the sets $J'\subseteq J$ and $R'\subseteq R$, 
let $Z[J',R']\in \CC^{J'\times R'}$ denote the submatrix of $Z$ 
with entries $z_{j,r}$ for $(j,r)\in J'\times R'$. 
Therefore, if $n\in\NN$, 
$Z=(z_{j,r})\in\CC^{n\times n}=\CC^{\set{n}\times\set{n}}$, and 
$J,R\subseteq\set{n}$ with $\card{J}=\card{R}$, we have
\begin{align*}
\per(Z)=\sum_{r\in\set{n}_{\neq}^n}\prod_{j=1}^nz_{j,r_j}
=\sum_{j\in\set{n}_{\neq}^n}\prod_{r=1}^nz_{j_r,r}, \quad
\per(Z[J,R])
=\sum_{r\in R_{\neq}^J}\prod_{j\in J}z_{j,r_j}
=\sum_{j\in J_{\neq}^R}\prod_{r\in R}z_{j_r,r}.
\end{align*}
An overview of properties and applications of permanents is provided 
in \citet{MR504978,MR688551,MR900069} and \citet{MR2140290}. 
For two non-empty sets $J$, $R$ and $Z=(z_{j,r})\in\CC^{J\times R}$, 
let 
\begin{align*}
Q(Z)=\prod_{r\in R}\Bigl(\sum_{j\in J}\abs{z_{j,r}}^2\Bigr)^{1/2}
\end{align*}
be the product of the Euclidean norms of all column vectors of $Z$.

Let us come back to the remarkable Hadamard type inequality for 
permanents. This says that, for $n\in\NN$ and
$Z=(z_{j,r})\in\CC^{n\times n}$, 
\begin{align}\label{e75870}
\abs{\per(Z)}\leq n!\prod_{r=1}^n\Bigl(\frac{1}{n}\sum_{j=1}^n
\abs{z_{j,r}}^2\Bigr)^{1/2}.
\end{align}
In other words, the modulus of the permanent of $Z$ is bounded from 
above by $\frac{n!}{n^{n/2}} Q(Z)$.

There are a few methods of proof of \eqref{e75870}.
\citet[Theorem 1.1]{MR2275869} gave two different proofs,
the first one of which uses a heat kernel interpolation argument,
while the second one is elementary and is based on induction and the 
arithmetic-geometric mean inequality. 
Furthermore, they determined all cases of equality in \eqref{e75870}
for $n\geq 3$,
see also Remark \ref{r62875}(\ref{r62875.d}) below.
Another proof can be found in \citet[Theorem 5.1]{MR2256997},
who used a Hilbert space technique.
As stated in \cite[Introduction]{MR2275869}, \eqref{e75870}
can also be obtained from Theorem~9.1.1 in Appendix~1 of  
\citet{MR1258086}. 
%%%%%%%%%%%%%%%%%%%%%%%%%%%%%%%%%%%%%%%%%%%%%%%%%%%%%%%%%%%%%%%%%%%%%%

\newpage
In their second proof of \eqref{e75870}, 
\citet[page 12]{MR2275869} showed that, 
for  $\emptyset\neq L\subseteq\set{n}$, $s\in L$, and 
$\ell=\card{L}$, 
\begin{align}\label{e37876}
\sum_{\newatop{J\subseteq\set{n}}{\card{J}=\ell}}
\Abs{\per(Z[J,L])}^2
&\leq \ell(n-\ell+1) 
\Bigl(\frac{1}{n}\sum_{j\in\set{n}}\abs{z_{j,s}}^2\Bigr)
\sum_{\newatop{J\subseteq\set{n}}{\card{J}=\ell-1}}
\Abs{\per(Z[J,L\setminus\{s\}])}^2,
\end{align}
from which they obtained inductively their Theorem 3.1, that is
\begin{align} \label{e738659}
\sum_{\newatop{J\subseteq\set{n}}{\card{J}=\ell}}
\Abs{\per(Z[J,L])}^2
&\leq (\ell!)^2\binomial{n}{\ell}\prod_{r\in L}\Bigl(\frac{1}{n}
\sum_{j\in\set{n}}\abs{z_{j,r}}^2\Bigr).
\end{align}
To put it differently, the left-hand side of \eqref{e738659} is 
bounded  by 
$\frac{(\ell!)^2}{n^{\ell}}\binomial{n}{\ell}Q(Z[\set{n},L])$. 
For $L=\set{n}$ and $\ell=n$, this reduces to inequality 
\eqref{e75870}. In the present paper, we present generalizations of 
\eqref{e37876} and \eqref{e738659} and, in turn, of \eqref{e75870}. 
We note that \citet[Theorem 1]{MR0170901} obtained an upper bound for 
the expression 
$\sum_{\newatop{J,L\subseteq\set{n}}{\card{J}=\card{L}=\ell}}
\Abs{\per(Z[J,L])}^2$,  which can also 
be estimated by using \eqref{e738659} or \eqref{e84376} below. The 
resulting bounds, however, are not easily comparable with the one 
in~\cite{MR0170901}.

Our first result is the following theorem, which is a direct 
consequence of the more general Theorem \ref{c628560} below.

%%%%%%%%%%%%%%%%%%%%%%%%%%%%%%%%%%%%%%%%%%%%%%%%%%%%%%%%%%%%%%%%%%%%%%

\begin{theorem} \label{t762986}
Let $n\in\NN$, $d\in\set{n}$, 
$M_1,\dots,M_d\subseteq \set{n}$ be pairwise disjoint sets with 
$\bigcup_{k=1}^dM_k=\set{n}$ and $m_k=\card{M_k}\in\NN$ 
for $k\in\set{d}$.
Let $Z=(z_{j,r})\in\CC^{n\times n}$ and 
$\alpha_{j,k}=\frac{1}{m_k}\sum_{r\in M_k}\abs{z_{j,r}}^2$
for $j\in\set{n}$ and $k\in\set{d}$. 
Then 
\begin{align}\label{e73766509}
\abs{\per(Z)}
&\leq n! \prod_{k=1}^d\Bigl(\frac{1}{\binomial{n}{m_k}}
\sum_{\newatop{I\subseteq \set{n}}{\card{I}=m_k}}
\prod_{j\in I}\alpha_{j,k}\Bigr)^{1/2}. 
\end{align}
\end{theorem}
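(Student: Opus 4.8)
The plan is to derive Theorem \ref{t762986} from a stronger summed inequality established by induction on the blocks (this summed statement should be essentially the general Theorem \ref{c628560} announced above). Writing $S(L,\ell)=\sum_{\substack{J\subseteq\set n\\\card J=\ell}}\abs{\per(Z[J,L])}^2$ whenever $L$ is a union of blocks and $\ell=\card L$, I would aim to prove
\[
S(L,\ell)\leq(\ell!)^2\binom{n}{\ell}\prod_{k\,:\,M_k\subseteq L}\frac{1}{\binom{n}{m_k}}\sum_{\substack{I\subseteq\set n\\\card I=m_k}}\prod_{j\in I}\alpha_{j,k}.
\]
Taking $L=\set n$ and $\ell=n$ collapses the outer binomial $\binom{n}{\ell}$ to $1$, leaves $(n!)^2$ in front, and yields \eqref{e73766509} after a square root. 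The base case $L=\emptyset$ is the trivial identity $S(\emptyset,0)=1$, so the whole argument rests on an inductive step that peels a single block $M_k\subseteq L$ off and passes from $S(L,\ell)$ to $S(L\setminus M_k,\ell-m_k)$. This step is the block analogue of the Carlen--Lieb--Loss inequality \eqref{e37876}: for $m_k=1$ it must reproduce \eqref{e37876} verbatim, and across the $d$ peels the prefactors telescope --- the falling factorials in $\ell$ and the rising factorials in $n-\ell$ each multiply out to $n!$ --- so that the accumulated constant is exactly $(n!)^2$.

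To perform one peel I would begin from the Laplace expansion of the permanent along the columns of $M_k$, namely $\per(Z[J,L])=\sum_{\substack{I\subseteq J\\\card I=m_k}}\per(Z[I,M_k])\,\per(Z[J\setminus I,L\setminus M_k])$, insert it into $S(L,\ell)$ and expand the modulus squared. The diagonal part (equal row-subsets) simplifies, after setting $K=J\setminus I$ and discarding the disjointness constraint $K\cap I=\emptyset$, to at most $\bigl(\sum_{\card I=m_k}\abs{\per(Z[I,M_k])}^2\bigr)\,S(L\setminus M_k,\ell-m_k)$. The first factor is the ``pure block'' quantity, and this is precisely where the symmetric-polynomial average on the right of \eqref{e73766509} is generated: the key sub-estimate is the sharp bound $\sum_{\card I=m_k}\abs{\per(Z[I,M_k])}^2\leq(m_k!)^2\sum_{\card I=m_k}\prod_{j\in I}\alpha_{j,k}$, i.e.\ by $(m_k!)^2$ times the elementary symmetric polynomial $e_{m_k}(\alpha_{1,k},\dots,\alpha_{n,k})$.

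I expect the obstacle to be twofold. First, the off-diagonal terms $I\neq I'$ in the expanded square are not sign-definite and cannot simply be discarded, so the whole Hermitian form must be controlled: a crude Cauchy--Schwarz over the $\binom{\ell}{m_k}$ Laplace summands is too wasteful, inflating the sharp prefactor by the factor $\prod_{i=1}^{m_k}\frac{n-m_k+i}{n-\ell+i}\geq1$, which equals $1$ only when $L=M_k$ (i.e.\ $\ell=m_k$); recovering the correct constant for $\ell>m_k$ forces one to extend the cancellations that make \eqref{e37876} tight to the block level, and this is the genuine content of the ``non-trivial extension'' of the CLL technique. Second, the pure-block estimate above is itself strictly stronger than anything \eqref{e738659} supplies --- the latter only bounds $\sum_{\card I=m_k}\abs{\per(Z[I,M_k])}^2$ by the product of column averages $\prod_{r\in M_k}\frac1n\sum_{j}\abs{z_{j,r}}^2$, which can be far larger than $e_{m_k}(\alpha_{\cdot,k})/\binom{n}{m_k}$ --- so it will not follow formally and should be isolated as an auxiliary inequality for elementary (and, in the general weighted peel, multisymmetric) polynomials, proved separately. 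A fallback would be to extend instead the first CLL proof, replacing the single Gaussian heat-kernel interpolation by one matched to the partition $M_1,\dots,M_d$; but the inductive route above looks like the more faithful generalization of \eqref{e37876}--\eqref{e738659}.
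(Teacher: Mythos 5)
Your plan reconstructs the paper's architecture exactly: the summed statement you posit is the paper's Theorem \ref{c628560}, proved by induction that peels one block per step via Proposition \ref{c845097}, whose proof is the Laplace expansion \eqref{e7386509} followed by a pure-block Hadamard bound; your telescoping of the constants to $(\ell!)^2\binomial{n}{\ell}$ is also exactly what happens there. One mis-assessment along the way: the pure-block estimate you flag as ``strictly stronger than anything \eqref{e738659} supplies'' and in need of a separate auxiliary proof is in fact immediate --- apply \eqref{e75870} to the transpose of each $m_k\times m_k$ submatrix $Z[I,M_k]$ to get the pointwise bound $\abs{\per(Z[I,M_k])}\leq m_k!\prod_{j\in I}\alpha_{j,k}^{1/2}$ (the paper's \eqref{e873296}), and your summed version follows termwise. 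So that ingredient costs nothing.

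The genuine gap is the step you yourself label ``the genuine content'': beating the crude Cauchy--Schwarz factor (your ratio $\prod_{i=1}^{m_k}\frac{n-m_k+i}{n-\ell+i}$ is the right quantification of the loss) and recovering the sharp constant $C(\ell,m,n)=\binomial{\ell}{m}\binomial{n-\ell+m}{m}/\binomial{n}{m}$ for the square of the Laplace sum. You identify this obstacle but supply no mechanism for it, and it is precisely where all the work of the paper lies. The paper isolates it as Proposition \ref{l845097}: for multiplicative $g(I)=\prod_{j\in I}g_j\geq0$ and arbitrary $h\geq0$ it bounds $\sum_{J}\bigl((g*_m h)(J)\bigr)^2$ with the sharp constant. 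Its proof expands the square over pairs $(I_1,I_2)$ with common part $D$, then redistributes mass among inner subsets using weights $f(a,b)=f_{\ell,m,n}(a,b)$ that must simultaneously satisfy the two counting identities \eqref{e87396} and \eqref{e7255}, applies $xy\leq\frac12(x^2+y^2)$ to the resulting symmetric pairs, and recombines the sums; the verification of \eqref{e7255} rests on the Pfaff--Saalsch\"utz identity (Lemma \ref{l47687}), and the author remarks that finding these weights ``was not that easy.'' Note also that multiplicativity of $g$ is genuinely used in Proposition \ref{l845097} (the paper does not know whether it can be dropped), so any replacement mechanism you devise must exploit the product structure of your peeled factor $\prod_{j\in I}\alpha_{j,k}$ rather than treat $g$ as an arbitrary set function. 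Without this weight system, or some alternative realization of the cancellations that make \eqref{e37876} tight at block level, your induction step does not close, and for $m_k=1$ your sketch merely re-derives what \eqref{e37876} already asserts rather than proving it.
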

%%%%%%%%%%%%%%%%%%%%%%%%%%%%%%%%%%%%%%%%%%%%%%%%%%%%%%%%%%%%%%%%%%%%%%
\begin{remark}\phantomsection\label{r62875}
% \phantomsection is used for correct links referring to Remark 1.1
\begin{enumerate}[(a)] 

\item
The modulus of the permanent of $Z$ is bounded from above 
by $n!\prod_{k=1}^d\bigl((\binomial{n}{m_k}m_k^{m_k})^{-1}
\sum_{\newatop{I\subseteq \set{n}}{\card{I}=m_k}}
Q(Z^T[M_k,I])^2\bigr)^{1/2}$, where $Z^T$ is the transpose of~$Z$. 

\item 
If $d=n$ and $m_1=\dots=m_n=1$, then 
\eqref{e73766509} reduces to \eqref{e75870}. 

\item  
If $d=1$ and $m_1=n$, then \eqref{e73766509} reduces to 
\eqref{e75870} applied to the transpose of $Z$.

\item \label{r62875.d}
It is easily shown that, in \eqref{e73766509}, equality holds, if 
one of the following conditions is true: 
\begin{itemize} 

\item a number $k\in\set{d}$ exists such that 
$\card{\{j\in\set{n}\,|\,z_{j,r}=0\mbox{ for all }r\in M_k\}}
\geq n-m_k+1$
or

\item there are numbers $\xi_j,\zeta_j\in\CC$ with 
$\abs{\xi_j}=\abs{\zeta_j}=1$ for 
all $j\in\set{n}$  and $y_{k}\in(0,\infty)$ for all $k\in\set{d}$,
such that $z_{j,r}=\xi_{j}\zeta_r y_{k}$
for all $j\in\set{n}$, $k\in\set{d}$ and $r\in M_k$.

\end{itemize}
\newpage
As was shown by \citet[Theorem 1.1]{MR2275869}, for $n=d\geq 3$ and 
$m_1=\dots=m_n=1$, equality in \eqref{e73766509} is equivalent to 
the condition above. However, in general, the situation may be more 
complicated as is outlined in the next example. 

\end{enumerate}
\end{remark}

%%%%%%%%%%%%%%%%%%%%%%%%%%%%%%%%%%%%%%%%%%%%%%%%%%%%%%%%%%%%%%%%%%%%%%
\begin{example}\label{ex167598}
Let the assumptions of Theorem \ref{t762986} hold.
\begin{enumerate}[(a)]

\item \label{ex167598.a}
Let us consider the case 
$n=d=2$, $M_1=\{1\}$, $M_2=\{2\}$, and $m_1=m_2=1$. 
Then \eqref{e73766509} says that 
$\abs{\per(Z)}^2
\leq (\abs{z_{1,1}}^2+\abs{z_{2,1}}^2)
(\abs{z_{1,2}}^2+\abs{z_{2,2}}^2)$,
which can also be shown directly using the Cauchy-Schwarz inequality:
\begin{align} 
\begin{split}
\abs{\per(Z)}^2
&=\abs{z_{1,1}z_{2,2}+z_{1,2}z_{2,1}}^2
\leq (\abs{z_{1,1}z_{2,2}}+\abs{z_{1,2}z_{2,1}})^2 \\
&\leq (\abs{z_{1,1}}^2+\abs{z_{2,1}}^2)(\abs{z_{1,2}}^2
+\abs{z_{2,2}}^2).
\end{split}\label{eq987657}
\end{align}
It is easily shown that, in the chain \eqref{eq987657}, equality 
holds if and only if $\abs{z_{1,1}z_{1,2}}=\abs{z_{2,1}z_{2,2}}$
and there is a number $\xi\in\CC$ such that  
$\abs{\xi}=1$, $z_{1,1}z_{2,2}=\xi\abs{z_{1,1}z_{2,2}}$, and
$z_{1,2}z_{2,1}=\xi\abs{z_{1,2}z_{2,1}}$. It is clear that the 
very left and right sides of \eqref{eq987657} are equal to zero if 
and only if $z_{1,1}=z_{2,1}=0$ or $z_{1,2}=z_{2,2}=0$. But equality 
in \eqref{eq987657} also holds if $z_{1,1}=z_{2,2}=0$ or 
$z_{1,2}=z_{2,1}=0$. We note that the case of equality in 
\eqref{e73766509} under the present assumptions was not discussed in 
\citet{MR2275869}. 

\item \label{ex167598.b} 
We now investigate the cases of equality in 
\eqref{e73766509} for $n=3$, $d=2$, $M_1=\set{2}$, $M_2=\{3\}$, 
$m_1=2$, and $m_2=1$. 
Let $e=(z_{1,1},z_{1,2})$, $f=(z_{2,1},z_{2,2})$, and
$g=(z_{3,1},z_{3,2})$ denote the row vectors of 
$Z[\set{3},\set{2}]$ and 
$h=(z_{1,3},z_{2,3},z_{3,3})^T$ the last 
column vector of $Z$, where $T$ means transposition as previously. 
Inequality \eqref{e73766509} is equivalent to 
\begin{align} \label{eq357865}
\abs{\per(Z)}^2
&\leq (\norm{e}^2\norm{f}^2+\norm{e}^2\norm{g}^2+\norm{f}^2\norm{g}^2)
\norm{h}^2,
\end{align}
where $\norm{\cdot}$ denotes the Euclidean norm. 
\begin{enumerate}[(i)] 

\item 
If $h=0$ or at least two of the vectors $e$, $f$, or $g$ are zero, 
then, in \eqref{eq357865}, equality holds, since both sides are 
equal to zero. 

\item 
Let us assume that $h\neq0$ and that 
exactly one of the vectors $e$, $f$, $g$ is equal to zero. 
For simplicity, we assume that $e=0$, $f\neq0$, and $g\neq0$. 
The remaining cases are treated analogously. 
Under the assumptions above, \eqref{eq357865} reduces to the chain
\begin{align} 
\abs{\per(Z)}^2
=\abs{\per(Z[\{2,3\},\set{2}])}^2\, \abs{z_{1,3}}^2
&\leq\norm{f}^2\norm{g}^2\norm{h}^2.\label{eq8365}
\end{align}
Using Part (\ref{ex167598.a}) of this example applied to the 
transpose of $Z[\{2,3\},\set{2}]$, we see that equality in 
\eqref{eq8365} holds if and only if  
$z_{1,3}\neq 0$, 
$z_{2,3}=z_{3,3}=0$,
$\abs{z_{2,1}z_{3,1}}=\abs{z_{2,2}z_{3,2}}$,
and there is a number $\xi\in\CC$ such that 
$\abs{\xi}=1$, $z_{2,1}z_{3,2}=\xi\abs{z_{2,1}z_{3,2}}$, 
$z_{3,1}z_{2,2}=\xi\abs{z_{3,1}z_{2,2}}$. 
We note that, if $z_{2,1}=0$, then $z_{2,2}\neq0$, $z_{3,2}=0$, 
and $z_{3,1}\neq0$; similarly, if $z_{2,2}=0$, then $z_{2,1}\neq0$, 
$z_{3,1}=0$, and $z_{3,2}\neq0$. 

\item 
Let us now assume that $e$, $f$, $g$, and $h$ are all non-zero 
vectors. Then, in \eqref{eq357865}, equality holds if and only if 
$Z\in(\CC\setminus\{0\})^{3\times 3}$ and 
there are numbers $\xi_j,\zeta_r\in\CC$ for $j,r\in\set{3}$ and 
$x\in(0,\infty)$, such that $\abs{\xi_j}=\abs{\zeta_r}=1$,
$z_{j,r}=\xi_{j}\zeta_r \abs{z_{j,r}}$, 
$\abs{z_{j,1}}=\abs{z_{j,2}}$, and
$\abs{z_{j,3}}=x\prod_{i\in\set{3}\setminus\{j\}}\abs{z_{i,1}}$
for all $j,r\in\set{3}$.
It is easily shown that the condition above is indeed sufficient
for equality in \eqref{eq357865}. The necessity is proved in 
Section~\ref{s7296} below.
\end{enumerate}

\item Let the assumptions of Part (\ref{ex167598.b}) be valid. 
From \eqref{e75870} we obtain 
\begin{align}\label{e75871}
\abs{\per(Z)}^2
\leq 36\prod_{r=1}^3
\Bigl(\frac{1}{3}\sum_{j=1}^3\abs{z_{j,r}}^2\Bigr).
\end{align}
Let us assume that $Z\in[0,\infty)^{3\times 3}$ and that $h\neq0$. 
Inequality \eqref{eq357865} is better than \eqref{e75871} if and 
only if 
$W:=(\norm{e}^2\norm{f}^2+\norm{e}^2\norm{g}^2+\norm{f}^2\norm{g}^2)
-\frac{4}{3}\prod_{r=1}^2\bigl(\sum_{j=1}^3\abs{z_{j,r}}^2\bigr)<0$.
It is easily seen that
\begin{align*}
W&=\sum_{j\in J}\Bigl(\frac{1}{3}
(z_{j_1,1}^2-z_{{j_2},1}^2)(z_{j_1,2}^2-z_{{j_2},2}^2)
-(z_{j_1,1}^2-z_{{j_2},2}^2)(z_{j_1,2}^2-z_{{j_2},1}^2)\Bigr),
\end{align*}
where $J=\{(1,2),(1,3),(2,3)\}$. 
Sometimes, $W$ can indeed be negative: for instance, 
if $z_{j,1}=z_{j,2}$ for all $j\in\set{3}$, then we have 
$W=-\frac{2}{3}\sum_{j\in J}(z_{j_1,1}^2-z_{{j_2},1}^2)^2\leq0$;
if $z_{1,1}=z_{1,2}=z_{2,2}=z_{3,1}=0$, then 
$W=-\frac{1}{3}z_{2,1}^2z_{3,2}^2\leq0$.  
However, if $z_{1,r}=z_{2,r}=z_{3,r}$ for all $r\in\set{2}$, 
then $W=3(z_{1,1}^2-z_{1,2}^2)^2\geq0$. 
\end{enumerate}

\end{example}

%%%%%%%%%%%%%%%%%%%%%%%%%%%%%%%%%%%%%%%%%%%%%%%%%%%%%%%%%%%%%%%%%%%%%%
\noindent
Under  certain assumptions, \eqref{e73766509} can be simplified, 
as is shown next.
%%%%%%%%%%%%%%%%%%%%%%%%%%%%%%%%%%%%%%%%%%%%%%%%%%%%%%%%%%%%%%%%%%%%%%
\begin{corollary} \label{c639860}
Let the assumptions of Theorem \textup{\ref{t762986}} be valid. 
For $k\in\set{d}$, let $s_k\in M_k$ be fixed. Let us assume that
$\abs{z_{j,r}}=\abs{z_{j,s_k}}$ for all $j\in\set{n}$, $k\in\set{d}$, 
and $r\in M_k$. Then   
\begin{align}\label{e52447}
\abs{\per(Z)}
&\leq n! \prod_{k=1}^d\Bigl(\frac{1}{\binomial{n}{m_k}}
\sum_{\newatop{I\subseteq \set{n}}{\card{I}=m_k}}
\prod_{j\in I}\abs{z_{j,s_k}}^2\Bigr)^{1/2}. 
\end{align}
\end{corollary}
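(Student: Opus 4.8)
The plan is to observe that the hypothesis $\abs{z_{j,r}}=\abs{z_{j,s_k}}$ for all $r\in M_k$ forces the quantities $\alpha_{j,k}$ appearing in Theorem \ref{t762986} to collapse to $\abs{z_{j,s_k}}^2$, after which \eqref{e52447} is nothing more than \eqref{e73766509} with this identity inserted. So the corollary is a direct specialization of Theorem \ref{t762986} rather than an independent result, and I would not invoke any permanent inequality afresh.

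First I would fix $j\in\set{n}$ and $k\in\set{d}$ and evaluate the definition $\alpha_{j,k}=\frac{1}{m_k}\sum_{r\in M_k}\abs{z_{j,r}}^2$ under the assumption. Since each of the $m_k=\card{M_k}$ summands equals $\abs{z_{j,s_k}}^2$, the sum is $m_k\abs{z_{j,s_k}}^2$, and hence $\alpha_{j,k}=\abs{z_{j,s_k}}^2$. This holds simultaneously for every $j\in\set{n}$ and every $k\in\set{d}$.

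Next I would substitute this identity into the right-hand side of \eqref{e73766509}. For each $k\in\set{d}$ and each $I\subseteq\set{n}$ with $\card{I}=m_k$, the inner product $\prod_{j\in I}\alpha_{j,k}$ becomes $\prod_{j\in I}\abs{z_{j,s_k}}^2$, while the normalizing factors $\binomial{n}{m_k}$ and the prefactor $n!$ are untouched, since they do not involve the entries $z_{j,r}$. Thus the bound furnished by Theorem \ref{t762986} turns verbatim into the bound claimed in \eqref{e52447}, and an appeal to Theorem \ref{t762986} completes the argument.

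There is essentially no obstacle here: the only computation is the elementary evaluation of $\alpha_{j,k}$, and the one point meriting a quick check is that the reduction $\alpha_{j,k}=\abs{z_{j,s_k}}^2$ is applied uniformly across all indices $k$ so that the product over $k\in\set{d}$ transforms consistently. Consequently I expect the proof to be a single short paragraph amounting to the substitution described above.
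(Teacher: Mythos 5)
Your proof is correct and is exactly the argument the paper intends: the hypothesis makes each $\alpha_{j,k}=\frac{1}{m_k}\sum_{r\in M_k}\abs{z_{j,r}}^2$ collapse to $\abs{z_{j,s_k}}^2$, whereupon \eqref{e52447} is just \eqref{e73766509} with this substitution. The paper gives no separate proof of Corollary~\ref{c639860} precisely because it is this immediate specialization of Theorem~\ref{t762986}, so nothing is missing from your write-up.
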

%%%%%%%%%%%%%%%%%%%%%%%%%%%%%%%%%%%%%%%%%%%%%%%%%%%%%%%%%%%%%%%%%%%%%%
\begin{remark}\label{r6386}
The right-hand sides of \eqref{e73766509}
and \eqref{e52447} can be further estimated using Maclaurin's 
inequality, which says that, for $n\in\NN$ and
$y_1,\dots,y_n\in[0,\infty)$, the normalized elementary symmetric 
polynomials 
$S_m=\frac{1}{\binomial{n}{m}}
\sum_{{I\subseteq{\set{n}}:\,}{\card{I}=m}}\prod_{j\in I}y_j$
satisfy $(S_{m})^{1/m}\geq (S_{m+1})^{1/(m+1)}$ for all 
$m\in\set{n-1}$; see \citet[Theorem 52, page 52]{MR0046395}. 
Hence, under the assumptions of Corollary \ref{c639860},
\eqref{e52447} is a sharpening of \eqref{e75870}. 
\end{remark}
%%%%%%%%%%%%%%%%%%%%%%%%%%%%%%%%%%%%%%%%%%%%%%%%%%%%%%%%%%%%%%%%%%%%%%

The rest of the paper is structured as follows. 
The next section is devoted to an application of Theorem~\ref{t762986}
to the coefficients of products of linear forms.
In Section~\ref{s65275}, we present and prove 
Theorem~\ref{c628560}, which generalizes Theorem~\ref{t762986}. 
The proof requires a technical inequality stated in
Proposition~\ref{l845097}, the proof of which we defer to  
Section~\ref{s376598}. Additionally, the cases of equality are 
determined. Section~\ref{s7296} contains the remaining 
proofs. 
%%%%%%%%%%%%%%%%%%%%%%%%%%%%%%%%%%%%%%%%%%%%%%%%%%%%%%%%%%%%%%%%%%%%%%
\section{Application to coefficients of products of linear forms}

%%%%%%%%%%%%%%%%%%%%%%%%%%%%%%%%%%%%%%%%%%%%%%%%%%%%%%%%%%%%%%%%%%%%%%
It is well-known that permanents are certain coefficients of 
products of linear forms. 
More precisely, for $n\in\NN$ 
and $Z=(z_{j,r})\in\CC^{n\times n}$, we have  
\begin{align}\label{e78365}
\per(Z)
=\Coeff{x_1\cdots x_n}{\prod_{j=1}^n\Bigl(\sum_{r=1}^nz_{j,r}x_r
\Bigr)},
\end{align}
e.g.\ see \citet[page 103]{MR504978}.
Here, $a_m=\coeff{x^m}{f(x)}$ denotes the coefficient of 
$x^m=\prod_{j=1}^nx_j^{m_j}$ in the (terminating) formal power series 
$f(x)=\sum_{k\in\Zpl^n}a_kx^k$ with $a_k\in\CC$,  
where $m\in\Zpl^n$, $x_1,\dots,x_n$ are algebraically independent
commuting indeterminates over $\CC$, and we write $x=(x_1,\dots,x_n)$.
On the other hand, some expressions more general than 
\eqref{e78365} can be represented  in terms of permanents.
This is shown in the next lemma, which goes back to \citet{muir1912},
who considered the case $n=d=4$. A proof for $n=d\in\NN$ can be 
found in \citet[formula (2.5)]{MR671835} and easily generalizes to 
$n,d\in\NN$. Since we use a different notation and to make the 
present paper more self-contained, we provide another short proof in 
Section \ref{s7296}.

%%%%%%%%%%%%%%%%%%%%%%%%%%%%%%%%%%%%%%%%%%%%%%%%%%%%%%%%%%%%%%%%%%%%%%

We need the following notation. 
For $d\in\NN$ and $m=(m_1,\dots,m_d)\in\Zpl^d$, 
set $\vecsum{m}=\sum_{k=1}^d m_k$ and $m!=\prod_{k=1}^dm_k!$. 
Furthermore, for a set $K\neq \emptyset$, $n\in\NN$, $s\in K^n$, 
and $k\in K$, let $\weight_k(s)=\sum_{j=1}^n\bbone_{\{k\}}(s_j)$
be the number of $k$'s in the vector $s$. 
Here, for a set $A$, let $\bbone_{A}(x)=1$, when $x\in A$, and 
$\bbone_{A}(x)=0$ otherwise. 
Then the family $\weight(s)=(\weight_k(s)\,|\,k\in K)\in\Zpl^K$ 
is called the weight of $s$ and satisfies
$\vecsum{\weight(s)}:=\sum_{k\in K}\weight_k(s)=n$. 

%%%%%%%%%%%%%%%%%%%%%%%%%%%%%%%%%%%%%%%%%%%%%%%%%%%%%%%%%%%%%%%%%%%%%%
 
\begin{lemma}\label{l487698}
Let $n,d\in\NN$, $(z_{j,k})\in\CC^{n\times d}$, and 
$x=(x_1,\dots,x_d)$, where $x_1,\dots,x_d$ are algebraically  
independent commuting indeterminates over $\CC$. For $m\in\Zpl^d$ 
with $\vecsum{m}=n$, $t\in\set{d}^n$ with $\weight(t)=m$, and
$Z'=(z_{j,t(r)})\in\CC^{n\times n}$, we then have 
\begin{align*}
\Coeff{x^m}{\prod_{j=1}^n\Bigl(\sum_{k=1}^dz_{j,k}x_k\Bigr)}
&=\sum_{\newatop{s\in\set{d}^n}{\weight(s)=m}}
\prod_{j=1}^n z_{j,s(j)}
=\frac{1}{m!}\per(Z').
\end{align*}
\end{lemma}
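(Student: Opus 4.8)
The plan is to prove the two claimed equalities separately, treating the combinatorial identity first and the permanent identity second, since they are of a different nature. The first equality asserts that the coefficient of $x^m$ in the product $\prod_{j=1}^n(\sum_{k=1}^d z_{j,k}x_k)$ equals $\sum_{s\in\set{d}^n:\,\weight(s)=m}\prod_{j=1}^n z_{j,s(j)}$. To see this I would expand the product directly. When we multiply out $\prod_{j=1}^n(\sum_{k=1}^d z_{j,k}x_k)$, a generic term is obtained by choosing, for each factor index $j\in\set{n}$, one summand $z_{j,s(j)}x_{s(j)}$; this choice is precisely a map $s\in\set{d}^n$. The resulting term is $\bigl(\prod_{j=1}^n z_{j,s(j)}\bigr)\prod_{j=1}^n x_{s(j)}$, and the monomial $\prod_{j=1}^n x_{s(j)}$ equals $x^{\weight(s)}$ by the very definition of the weight (the exponent of $x_k$ is the number of $j$ with $s(j)=k$, i.e.\ $\weight_k(s)$). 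Collecting all terms whose monomial equals $x^m$ therefore amounts to summing $\prod_{j=1}^n z_{j,s(j)}$ over exactly those $s$ with $\weight(s)=m$, which is the first equality.

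For the second equality I would relate the weight-constrained sum to a permanent. Fix any $t\in\set{d}^n$ with $\weight(t)=m$ and set $Z'=(z_{j,t(r)})\in\CC^{n\times n}$, so that the $(j,r)$ entry of $Z'$ is $z_{j,t(r)}$. By the definition of the permanent, $\per(Z')=\sum_{\sigma\in\set{n}_{\neq}^n}\prod_{j=1}^n z_{j,t(\sigma(j))}$, the sum running over all permutations $\sigma$ of $\set{n}$. The key observation is that the composite map $s=t\circ\sigma$ again lies in $\set{d}^n$ and has $\weight(s)=\weight(t)=m$, since $\sigma$ merely permutes the coordinates and hence does not change how many times each value $k$ is attained. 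Thus $\prod_{j=1}^n z_{j,t(\sigma(j))}=\prod_{j=1}^n z_{j,s(j)}$, and each such $s$ arises from $\per(Z')$.

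The main obstacle, and the step I would spell out most carefully, is the counting multiplicity: the map $\sigma\mapsto t\circ\sigma$ from $\set{n}_{\neq}^n$ to $\{s\in\set{d}^n\,|\,\weight(s)=m\}$ is surjective but not injective. Indeed, two permutations $\sigma,\sigma'$ give the same $s$ exactly when $t\circ\sigma=t\circ\sigma'$, i.e.\ when $\sigma'\sigma^{-1}$ fixes every fibre $t^{-1}(k)$ setwise; the number of such permutations is $\prod_{k=1}^d \weight_k(t)!=\prod_{k=1}^d m_k!=m!$, which is independent of the target $s$ because every fibre $t^{-1}(k)$ has size $m_k$. Hence each $s$ with $\weight(s)=m$ is hit exactly $m!$ times, giving $\per(Z')=m!\sum_{s:\,\weight(s)=m}\prod_{j=1}^n z_{j,s(j)}$. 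Dividing by $m!$ yields the second equality and completes the proof. I should also note explicitly that the answer is independent of the particular choice of $t$ with $\weight(t)=m$, which is clear since the middle sum $\sum_{s:\,\weight(s)=m}\prod_{j=1}^n z_{j,s(j)}$ does not mention $t$ at all.
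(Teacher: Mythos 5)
Your proof is correct. The first half coincides with the paper's: both expand the product, observe that a choice of summand in each factor is a map $s\in\set{d}^n$ contributing the monomial $x^{\weight(s)}$, and collect the terms with $\weight(s)=m$. For the second equality, however, you take a genuinely more direct route. The paper symmetrizes: it first shows the weight-constrained sum is invariant under replacing $s(j)$ by $s(r_j)$ for any permutation $r$, averages over all $n!$ permutations, uses that any two maps of equal weight differ by precomposition with a permutation to identify each inner sum $\sum_{r}\prod_j z_{j,s(r_j)}$ with $\per(Z')$, and finally invokes the multinomial count $\card{\{s\,|\,\weight(s)=m\}}=n!/m!$. You instead start from $\per(Z')=\sum_{\sigma}\prod_j z_{j,t(\sigma(j))}$ and analyze the map $\sigma\mapsto t\circ\sigma$ directly: it lands in the weight-$m$ maps, is surjective, and has all fibres of size $\prod_k m_k!=m!$ by the stabilizer computation ($t\circ\sigma=t\circ\sigma'$ iff $\sigma'\sigma^{-1}$ preserves each fibre $t^{-1}(k)$ setwise). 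This orbit--stabilizer style count replaces the paper's averaging and makes the multinomial coefficient $n!/m!$ unnecessary (indeed it drops out as a by-product, since $n!=m!\cdot\card{\{s\,|\,\weight(s)=m\}}$); the paper's version, in exchange, never has to verify uniformity of fibre sizes explicitly, folding that into the two substitution identities. One point you assert rather than prove is surjectivity of $\sigma\mapsto t\circ\sigma$; it deserves the one-line justification that for given $s$ with $\weight(s)=m$ one assembles $\sigma$ from arbitrary bijections $s^{-1}(k)\to t^{-1}(k)$ between fibres of equal cardinality $m_k$, but this is routine and does not affect the validity of your argument. Your closing remark that the result is independent of the choice of $t$ is also correct and worth keeping.
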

%%%%%%%%%%%%%%%%%%%%%%%%%%%%%%%%%%%%%%%%%%%%%%%%%%%%%%%%%%%%%%%%%%%%%%
Clearly, if $d\geq 2$ and $z_{j,d}=1$ for all $j\in\set{n}$,  then 
\begin{align*}
\Coeff{x^m}{\prod_{j=1}^n\Bigl(\sum_{k=1}^dz_{j,k}x_k\Bigr)}
=\Coeff{x_1^{m_1}\cdots x_{d-1}^{m_{d-1}}}{
\prod_{j=1}^n\Bigl(1+\sum_{k=1}^{d-1}z_{j,k}x_k\Bigr)}.
\end{align*}
Therefore, the coefficients considered in Lemma \ref{l487698} 
represent the natural generalization of the elementary symmetric 
polynomials, e.g.\ see \citet[Chapter 4, Section 2B]{MR1264417}.
These polynomials are members of the more general class of vector 
symmetric 
polynomials, which are also known under different names, such as 
multisymmetric polynomials or MacMahon symmetric functions, 
e.g.\ see \citet{MR2499933}. 

The next theorem follows from Lemma \ref{l487698} and 
Corollary \ref{c639860}.  
%%%%%%%%%%%%%%%%%%%%%%%%%%%%%%%%%%%%%%%%%%%%%%%%%%%%%%%%%%%%%%%%%%%%%%
\begin{theorem} \label{c873765}
Let $n,d\in\NN$, $(z_{j,k})\in\CC^{n\times d}$, 
$x=(x_1,\dots,x_d)$, where $x_1,\dots,x_d$ are 
algebraically independent commuting indeterminates over $\CC$.
For $m\in\Zpl^d$ with $\vecsum{m}=n$, we have 
\begin{align}\label{e76296}
\ABS{\Coeff{x^m}{\prod_{j=1}^n\Bigl(\sum_{k=1}^dz_{j,k}x_k\Bigr)}}
\leq \frac{n!}{m!}\prod_{k=1}^d
\Bigl(\frac{1}{\binomial{n}{m_k}}
\sum_{\newatop{I\subseteq{\set{n}}}{\card{I}=m_k}}
\prod_{j\in I}\abs{z_{j,k}}^2\Bigr)^{1/2}.
\end{align}
\end{theorem}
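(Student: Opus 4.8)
The plan is to derive \eqref{e76296} by substituting the permanent representation from Lemma~\ref{l487698} into the permanent bound of Corollary~\ref{c639860}. The right-hand side of \eqref{e76296} is a product of $d$ factors indexed by $k\in\set{d}$, whereas Corollary~\ref{c639860} (through Theorem~\ref{t762986}) insists that each block $M_k$ be non-empty; so the first thing I would do is dispose of the indices with $m_k=0$. For any such $k$ the corresponding factor is
\begin{align*}
\Bigl(\frac{1}{\binomial{n}{0}}\sum_{\newatop{I\subseteq\set{n}}{\card{I}=0}}\prod_{j\in I}\abs{z_{j,k}}^2\Bigr)^{1/2}=1,
\end{align*}
since the only $I\subseteq\set{n}$ with $\card{I}=0$ is $I=\emptyset$ and the empty product equals $1$; likewise $m_k!=1$, so such indices contribute nothing to $m!$ either. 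Moreover, no factor in $\prod_{j=1}^n(\sum_{k=1}^dz_{j,k}x_k)$ can furnish the variable $x_k$ when $m_k=0$, so the coefficient on the left of \eqref{e76296} is unchanged if we discard the columns with $m_k=0$. Hence I may assume, after relabelling, that $m_k\in\NN$ for all $k\in\set{d}$; then $\vecsum{m}=n$ forces $d\leq n$, i.e.\ $d\in\set{n}$, as required by Theorem~\ref{t762986}.

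Next I would fix any $t\in\set{d}^n$ with $\weight(t)=m$ and set $Z'=(z_{j,t(r)})\in\CC^{n\times n}$. By Lemma~\ref{l487698},
\begin{align*}
\Coeff{x^m}{\prod_{j=1}^n\Bigl(\sum_{k=1}^dz_{j,k}x_k\Bigr)}=\frac{1}{m!}\per(Z'),
\end{align*}
so it suffices to bound $\abs{\per(Z')}$. To this end I would apply Corollary~\ref{c639860} to $Z'$ with the partition $M_k=\{r\in\set{n}\,|\,t(r)=k\}$ for $k\in\set{d}$. These sets are pairwise disjoint with $\bigcup_{k=1}^dM_k=\set{n}$, and $\card{M_k}=\weight_k(t)=m_k\in\NN$, so the hypotheses of Theorem~\ref{t762986} are met.

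It then remains to check the extra assumption of Corollary~\ref{c639860}. Choosing an arbitrary $s_k\in M_k$, every column $r\in M_k$ of $Z'$ coincides with the $k$-th column $(z_{j,k})_{j\in\set{n}}$ of the original matrix, because $t(r)=k=t(s_k)$; in particular the $(j,r)$-entry of $Z'$ equals its $(j,s_k)$-entry, so trivially $\abs{z'_{j,r}}=\abs{z'_{j,s_k}}$ for all $j\in\set{n}$, $k\in\set{d}$, $r\in M_k$. Corollary~\ref{c639860} therefore gives
\begin{align*}
\abs{\per(Z')}\leq n!\prod_{k=1}^d\Bigl(\frac{1}{\binomial{n}{m_k}}\sum_{\newatop{I\subseteq\set{n}}{\card{I}=m_k}}\prod_{j\in I}\abs{z'_{j,s_k}}^2\Bigr)^{1/2},
\end{align*}
and since $z'_{j,s_k}=z_{j,k}$, dividing by $m!$ reproduces \eqref{e76296} exactly (the factors for the discarded indices being $1$). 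The only point genuinely needing care is the bookkeeping around the vanishing weights $m_k=0$; once the blocks $M_k$ are in place, the rest is a direct substitution, so I expect no real obstacle beyond that.
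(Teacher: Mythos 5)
Your proposal is correct and follows essentially the same route as the paper's own proof: choose $t\in\set{d}^n$ with $\weight(t)=m$, then combine Lemma~\ref{l487698} with Corollary~\ref{c639860} applied to $Z'=(z_{j,t(r)})$ and the partition $M_k=\{r\in\set{n}\,|\,t(r)=k\}$, whose modulus-constancy hypothesis holds trivially because all columns of $Z'$ indexed by $M_k$ are identical. You are in fact slightly more careful than the paper, whose one-line proof silently passes over the indices with $m_k=0$ (for which $M_k=\emptyset$ and $d>n$ are not covered by the hypotheses of Theorem~\ref{t762986}); your preliminary reduction discarding those indices cleanly closes that small bookkeeping gap.
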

%%%%%%%%%%%%%%%%%%%%%%%%%%%%%%%%%%%%%%%%%%%%%%%%%%%%%%%%%%%%%%%%%%%%%%
\begin{remark}
\begin{enumerate}[(a)] 

\item 
In \eqref{e76296}, equality holds if one of the following conditions
holds:
\begin{itemize}\itemsep0pt

\item 
a number $k\in\set{d}$ exists such that $m_k\geq 1$ and
$\card{\{j\in\set{n}\,|\,z_{j,k}=0\}}\geq n-m_k+1$ or

\item 
there are numbers $\xi_j\in\CC$ with $\abs{\xi_j}=1$ for 
all $j\in\set{n}$ and $y_k\in\CC\setminus\{0\}$ 
for all $k\in\set{d}$ with $m_k\geq 1$, 
such that $z_{j,k}=\xi_{j}y_k$
for all such $j$ and $k$.

\end{itemize}

\item The right-hand side of \eqref{e76296} can be further estimated 
using Maclaurin's inequality, see Remark \ref{r6386}. 

\end{enumerate}
\end{remark}

%%%%%%%%%%%%%%%%%%%%%%%%%%%%%%%%%%%%%%%%%%%%%%%%%%%%%%%%%%%%%%%%%%%%%%
\section{Auxiliary inequalities}\label{s65275}
%%%%%%%%%%%%%%%%%%%%%%%%%%%%%%%%%%%%%%%%%%%%%%%%%%%%%%%%%%%%%%%%%%%%%%
The following proposition forms the main argument in the proof of 
Proposition~\ref{c845097} below, from which Theorem~\ref{c628560}
can be derived. Its proof and the proof of the subsequent remark can 
be found in Section~\ref{s376598}.
%%%%%%%%%%%%%%%%%%%%%%%%%%%%%%%%%%%%%%%%%%%%%%%%%%%%%%%%%%%%%%%%%%%%%%

\begin{proposition}\label{l845097}
Let $n\in\NN$, 
$\ell,m\in\Zpl$ with $m\leq \ell\leq n$, 
$g_1,\dots,g_n\in[0,\infty)$ and $g(I)=\prod_{j\in I}g_j$
for $I\subseteq\set{n}$. 
For $K\subseteq\set{n}$ with $\card{K}=\ell-m$, let
$h(K)\in[0,\infty)$. 
For $J\subseteq\set{n}$ with $\card{J}=\ell$, let 
$(g*_m h)(J)=\sum_{I\subseteq J:\,\card{I}=m}g(I)h(J\setminus I)$. 
Then we have
\begin{align}\label{e9843776aa}
\frac{1}{\binomial{n}{\ell}}
\sum_{\newatop{J\subseteq\set{n}}{\card{J}=\ell}}
\Bigl(\frac{1}{ \binomial{\ell}{m}}(g*_m h)(J)\Bigr)^2
&\leq
\Bigl(\frac{1}{\binomial{n}{m}}
\sum_{\newatop{I\subseteq \set{n}}{\card{I}=m}}g(I)^2\Bigr)
\Bigl(\frac{1}{\binomial{n}{\ell-m}}
\sum_{\newatop{J\subseteq\set{n}}{\card{J}=\ell-m}}h(J)^2\Bigr).
\end{align}
\end{proposition}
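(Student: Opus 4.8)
The plan is to prove \eqref{e9843776aa} by induction on the cardinality $n$ of the ground set, peeling off a single element at each step, so that the complementary structure $J=I\sqcup(J\setminus I)$ linking $g$ and $h$ is never broken. Write $p=\ell-m$, abbreviate $g(I)=\prod_{j\in I}g_j$ and $E_m=\sum_{\card{I}=m}g(I)^2$, and let $\Sigma_n(\ell,m)[h]$ denote the unnormalized left-hand sum $\sum_{\card{J}=\ell}(g*_m h)(J)^2$ taken over subsets of $\set{n}$. First I would clear the degenerate and base cases: for $m=0$ and for $m=\ell$ the operation $*_m$ is trivial and \eqref{e9843776aa} holds with equality, whereas for $\ell=n$ there is only the term $J=\set{n}$ and the claim is exactly the Cauchy--Schwarz inequality for the two families $(g(I))_{\card{I}=m}$ and $(h(\set{n}\setminus I))_{\card{I}=m}$. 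The case $m=1$ coincides with the Carlen--Lieb--Loss reduction \eqref{e37876} (with an arbitrary nonnegative $h$ in place of the permanent), and its constant $\ell(n-\ell+1)/n$ already matches the normalizations in \eqref{e9843776aa}; the element-peeling idea behind \eqref{e37876} is precisely what I would extend to all $m$.

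For the induction I would fix $(\ell,m)$ with $1\le m\le\ell-1$ and induct on $n>\ell$, the base case $n=\ell$ being the Cauchy--Schwarz instance above. Each $\ell$-subset $J$ either lies in $\set{n-1}$ or equals $J'\cup\{n\}$ with $\card{J'}=\ell-1$; in the latter case, splitting the defining sum of $(g*_m h)(J)$ according to whether the removed element $n$ is charged to the $g$-factor or to $h$ gives
\[
(g*_m h)(J'\cup\{n\}) = (g*_m h_n)(J') + g_n\,(g*_{m-1}h)(J'),
\]
where $h_n(K)=h(K\cup\{n\})$ is defined on the $(p-1)$-subsets of $\set{n-1}$ and every sum on the right runs over subsets of $\set{n-1}$. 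Squaring and summing over all $J$ then yields
\begin{align*}
\Sigma_n(\ell,m)[h]
&= \Sigma_{n-1}(\ell,m)[h] + \Sigma_{n-1}(\ell-1,m)[h_n]
+ g_n^2\,\Sigma_{n-1}(\ell-1,m-1)[h] \\
&\quad + 2g_n\sum_{\newatop{J'\subseteq\set{n-1}}{\card{J'}=\ell-1}}
(g*_m h_n)(J')\,(g*_{m-1}h)(J'),
\end{align*}
in which the three $\Sigma_{n-1}$ terms are smaller instances of the Proposition over $\set{n-1}$ and are bounded by the induction hypothesis.

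The main obstacle will be the cross term, together with the bookkeeping needed to reassemble the binomial and elementary-symmetric factors. I expect to bound the cross term by a weighted Cauchy--Schwarz or arithmetic--geometric inequality, the weight chosen so that, after applying the induction hypothesis and the splitting $E_m=E_m'+g_n^2E_{m-1}'$ of the $g$-moment over $\set{n-1}$ (primes denoting sums over $\set{n-1}$), all contributions telescope exactly into the target $\binom{n}{\ell}\binom{\ell}{m}^2\cdot\frac{1}{\binom{n}{m}}E_m\cdot\frac{1}{\binom{n}{p}}\sum_{\card{K}=p}h(K)^2$. It is essential that $g$ and $h$ be decoupled only through this recursion: a single global Cauchy--Schwarz, Jensen's inequality applied to the inner average, or a naive iteration of \eqref{e37876} all discard the complementarity $I\sqcup(J\setminus I)=J$ and are strictly too weak. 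For example, iterating \eqref{e37876} effectively replaces the factor $E_m$ by $\frac{1}{m}E_1E_{m-1}\ge E_m$, which points the wrong way. Should the constant-chasing become unwieldy, an alternative is to recast the claim as the operator bound $\Phi^*\Phi\preceq\big(\tfrac{1}{\binom{n}{m}}E_m\big)\mathrm{Id}$ for the convolution operator $\Phi\colon h\mapsto g*_m h$ between the suitably normalized $\ell^2$-spaces, and to estimate the largest eigenvalue of its explicit kernel $M(K,K')=\sum_{\newatop{J\supseteq K\cup K'}{\card{J}=\ell}}g(J\setminus K)g(J\setminus K')$, whose structure simplifies because $g$ is multiplicative.
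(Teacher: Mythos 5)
Your strategy is genuinely different from the paper's, and its skeleton is sound; but the step you leave as ``I expect to bound the cross term \dots so that all contributions telescope exactly'' is precisely where the argument lives or dies, and the obvious implementations of it provably fail, so as written this is a real gap. For orientation: the paper does not induct on $n$ at all. It expands $((g*_m h)(J))^2$ over pairs of $m$-subsets $(I_1,I_2)$, decomposes by the overlap $D=I_1\cap I_2$, inserts a partition of unity with weights $f(a,b)$ engineered (via Vandermonde and the Pfaff--Saalsch\"utz identity, Lemmas 4.1--4.2) to satisfy two moment identities, applies the single inequality $xy\le\frac12(x^2+y^2)$ to a symmetrized pairing, and recounts globally. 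Your peeling identity $(g*_m h)(J'\cup\{n\})=(g*_m h_n)(J')+g_n\,(g*_{m-1}h)(J')$ is correct, and your base cases are right: $\ell=n$ is exactly Cauchy--Schwarz, and the constant $C_n(\ell,1)=\ell(n-\ell+1)/n$ matches \eqref{e37876}. One structural error: since the recursion lowers both $\ell$ and $m$, you cannot ``fix $(\ell,m)$ and induct on $n$''; you must induct on $n$ with the statement for \emph{all} pairs $m\le\ell\le n-1$ available.

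Now the gap. Write $p=\ell-m$, $C_n(\ell,m)=\binomial{n}{\ell}\binomial{\ell}{m}^2/\bigl(\binomial{n}{m}\binomial{n}{p}\bigr)$ for the unnormalized constant, and set $u=E_m'$, $v=g_n^2E_{m-1}'$ (primes denoting $g$-moments over $\set{n-1}$), $x=\sum_{K\subseteq\set{n-1},\,\card{K}=p}h(K)^2$, $y=\sum_{K\subseteq\set{n-1},\,\card{K}=p-1}h(K\cup\{n\})^2$, so that $E_m=u+v$ and $\sum_{\card{K}=p}h(K)^2=x+y$. The induction hypothesis charges the three squared sums at most $C_0\,ux+C_1\,uy+C_2\,vx$, where $C_0=C_{n-1}(\ell,m)$, $C_1=C_{n-1}(\ell-1,m)$, $C_2=C_{n-1}(\ell-1,m-1)$, and Cauchy--Schwarz plus the hypothesis bounds the cross term by $2\sqrt{C_1C_2\,uvxy}$. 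If you now split this two ways, $2\sqrt{AB}\le tA+t^{-1}B$ with $A\propto uy$, $B\propto vx$, feasibility requires $(R_1-1)(R_2-1)\ge1$ for $R_i=C_n(\ell,m)/C_i$; but $R_1-1=\frac{m(n-\ell)}{pn}$ and $R_2-1=\frac{p(n-\ell)}{mn}$, whose product is $\bigl(\frac{n-\ell}{n}\bigr)^2<1$ for $1\le m\le\ell-1<\ell<n$, so \emph{no} weight $t$ works; the alternative pairing into $ux$ and $vy$ fails just as strictly. What saves the induction is a four-way split that also charges the slack of the product $vy=g_n^2E_{m-1}'\cdot y$, which receives no direct contribution: with $a=C_n-C_0$, $b=C_n-C_1$, $c=C_n-C_2$, $d=C_n$ (all positive here), two plain AM--GMs give $2(\sqrt{ad}+\sqrt{bc})\sqrt{uvxy}\le(a\,ux+d\,vy)+(b\,uy+c\,vx)$, and this closes the induction exactly when $\sqrt{C_1C_2}\le\sqrt{ad}+\sqrt{bc}$. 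Setting $\rho_i=C_i/C_n$, one computes
\begin{align*}
\rho_0=\frac{n(n-\ell)}{(n-m)(n-p)},\quad
\rho_1=\frac{pn}{\ell(n-m)},\quad
\rho_2=\frac{mn}{\ell(n-p)},\quad
1-\rho_0=\frac{mp}{(n-m)(n-p)},
\end{align*}
and $1-\rho_1=\frac{m(n-\ell)}{\ell(n-m)}$, $1-\rho_2=\frac{p(n-\ell)}{\ell(n-p)}$, whence
\begin{align*}
\sqrt{\rho_1\rho_2}
=\frac{n\sqrt{mp}}{\ell\sqrt{(n-m)(n-p)}}
=\sqrt{1-\rho_0}+\sqrt{(1-\rho_1)(1-\rho_2)},
\end{align*}
i.e.\ the feasibility condition holds \emph{with equality} (it reduces to $\frac{n}{\ell}=1+\frac{n-\ell}{\ell}$). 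Summing all charges then yields exactly $C_n(u+v)(x+y)$, which is the target. So your route is completable, but only through this exactly tuned split with zero slack; this verification is the proof, and it is absent from your proposal.

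Once completed, your argument is arguably more elementary than the paper's (no hypergeometric identity, no weight family $f(a,b)$), which is a genuine gain. What it costs: the paper's proof uses only $xy\le\frac12(x^2+y^2)$ applied to explicit terms, which is what makes the complete equality analysis of Remark \ref{r84876} tractable, whereas equality cases in your nested induction would have to be traced through every peeling step and through the zero-slack AM--GMs. Also, your fallback via the kernel $M(K,K')$ is less promising than it looks: for non-constant $g$ the kernel is not diagonalized by the Johnson-scheme symmetry, so estimating its top eigenvalue is essentially the original problem in disguise.
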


%%%%%%%%%%%%%%%%%%%%%%%%%%%%%%%%%%%%%%%%%%%%%%%%%%%%%%%%%%%%%%%%%%%%%%
We note that the stated assumption on $g$ seems to be somewhat 
restrictive. However, we do not know whether it can be dropped or not.
Further, there is a connection between the terms $(g*_m h)(J)$ used in 
Proposition~\ref{l845097} and the so-called subset convolution of 
two set functions. For two set functions $g$ and $h$ defined on the 
power set $2^{\set{n}}$ of $\set{n}$ with values in an arbitrary 
ring, the subset convolution $f:=g*h$ is a set function on 
$2^{\set{n}}$ defined by
$f(J)=\sum_{I\subseteq J}g(I)h(J\setminus I)$ for all 
$J\subseteq\set{n}$, e.g.\ see \citet{MR2402429}. 
If $g$ and $h$ are as in Proposition~\ref{l845097} and
$h$ is extended to a set function on $2^{\set{n}}$
such that $h(K)=0$ for all $K\subseteq\set{n}$ with 
$\card{K}\neq\ell-m$, then we can write $(g*_m h)(J)=(g*h)(J)$ for all
$J\subseteq\set{n}$ with $\card{J}=\ell$.

%%%%%%%%%%%%%%%%%%%%%%%%%%%%%%%%%%%%%%%%%%%%%%%%%%%%%%%%%%%%%%%%%%%%%%

\begin{remark} \label{r84876}
In \eqref{e9843776aa}, equality holds if and only if at least
one of the following five conditions is valid: 
\begin{enumerate}[(i)]

\item \label{r84876.i} 
$m\in\{0,\ell\}$ or

\item %\label{r84876.ii} 
$\card{\{j\in\set{n}\,|\,g_j>0\}}\leq m-1$  or 

\item %\label{r84876.iii}
$h(K)=0$ for all sets $K\subseteq\set{n}$ with 
$\card{K}=\ell-m$ or 

\item \label{r84876.iv}
$\ell=n$ and a number $x\in[0,\infty)$ exists such that 
$g(I)=x h(\set{n}\setminus I)$ for all $I\subseteq\set{n}$ 
with $\card{I}=m$ or 

\item \label{r84876.v}
$g_1=\dots=g_n$ and 
$h(K)=h(K')$ for all $K,K'\subseteq\set{n}$ with 
$\card{K}=\card{K'}=\ell-m$.

\end{enumerate}
\end{remark}
%%%%%%%%%%%%%%%%%%%%%%%%%%%%%%%%%%%%%%%%%%%%%%%%%%%%%%%%%%%%%%%%%%%%%%

The next proposition contains a generalization of (\ref{e37876}), see 
Remark~\ref{r8397698} below. 

%%%%%%%%%%%%%%%%%%%%%%%%%%%%%%%%%%%%%%%%%%%%%%%%%%%%%%%%%%%%%%%%%%%%%%
\begin{proposition}\label{c845097}
Let $n\in\NN$, $\emptyset\neq M\subseteq L\subseteq\set{n}$, 
$\ell=\card{L}$, $m=\card{M}\in\NN$, 
$Z=(z_{j,r})\in\CC^{\set{n}\times L}$, and
$g(I)=\prod_{j\in I}(\frac{1}{m}\sum_{r\in M}\abs{z_{j,r}}^2)$
for $I\subseteq\set{n}$. 
For $K\subseteq\set{n}$ with $\card{K}=\ell-m$, let 
$h(K)=\abs{\per(Z[K,L\setminus M])}$. 
Set $C=C(\ell,m,n)
=\frac{\binomial{\ell}{m}\binomial{n-\ell+m}{m}}{\binomial{n}{m}}$. 
Then 
\begin{align}\label{e9843776}
\sum_{\newatop{J\subseteq\set{n}}{\card{J}=\ell}}\Abs{\per(Z[J,L])}^2
&\leq (m!)^2C
\Bigl(\sum_{\newatop{I\subseteq \set{n}}{\card{I}=m}}g(I)\Bigr)
\sum_{\newatop{J\subseteq\set{n}}{\card{J}=\ell-m}}h(J)^2.
\end{align}
\end{proposition}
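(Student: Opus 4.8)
The plan is to reduce \eqref{e9843776} to the abstract inequality in Proposition~\ref{l845097}, using a block (Laplace-type) expansion of the permanent together with the Hadamard type inequality \eqref{e75870}. First I would expand each permanent $\per(Z[J,L])$ along the block of columns indexed by $M$. Because the permanent carries no signs, grouping the bijections $J\longrightarrow L$ according to the preimage $I\subseteq J$ of $M$ gives directly, from the definition,
\[
\per(Z[J,L])=\sum_{\newatop{I\subseteq J}{\card{I}=m}}\per(Z[I,M])\,\per(Z[J\setminus I,L\setminus M]),
\]
for every $J\subseteq\set{n}$ with $\card{J}=\ell$. Taking moduli and applying the triangle inequality yields $\abs{\per(Z[J,L])}\leq\sum_{I\subseteq J,\,\card{I}=m}\abs{\per(Z[I,M])}\,h(J\setminus I)$, where $h(K)=\abs{\per(Z[K,L\setminus M])}$ is exactly the function in the statement and $\card{J\setminus I}=\ell-m$.

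Next I would bound the $m\times m$ permanents $\per(Z[I,M])$. Applying \eqref{e75870} to the transpose $Z^T[M,I]$ — whose columns are indexed by $j\in I$ with entries $z_{j,r}$ for $r\in M$ — gives
\[
\abs{\per(Z[I,M])}=\abs{\per(Z^T[M,I])}\leq m!\prod_{j\in I}\Bigl(\frac{1}{m}\sum_{r\in M}\abs{z_{j,r}}^2\Bigr)^{1/2}=m!\,g(I)^{1/2}.
\]
Setting $G_j=\bigl(\frac{1}{m}\sum_{r\in M}\abs{z_{j,r}}^2\bigr)^{1/2}\in[0,\infty)$ and $G(I)=\prod_{j\in I}G_j=g(I)^{1/2}$, the two displays combine to $\abs{\per(Z[J,L])}\leq m!\,(G*_m h)(J)$, whence $\sum_{J:\,\card{J}=\ell}\abs{\per(Z[J,L])}^2\leq(m!)^2\sum_{J:\,\card{J}=\ell}\bigl((G*_m h)(J)\bigr)^2$.

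Finally I would invoke Proposition~\ref{l845097} with the nonnegative weights $G_j$ — this is precisely where the nonnegativity hypothesis of that proposition is used, and it holds since $G_j\geq 0$. As $G(I)^2=g(I)$, that inequality rearranges to
\[
\sum_{\newatop{J\subseteq\set{n}}{\card{J}=\ell}}\bigl((G*_m h)(J)\bigr)^2\leq\frac{\binomial{n}{\ell}\binomial{\ell}{m}^2}{\binomial{n}{m}\binomial{n}{\ell-m}}\Bigl(\sum_{\newatop{I\subseteq\set{n}}{\card{I}=m}}g(I)\Bigr)\sum_{\newatop{K\subseteq\set{n}}{\card{K}=\ell-m}}h(K)^2.
\]
Combining this with the previous display gives \eqref{e9843776}, provided the prefactor equals $(m!)^2C$, i.e.\ provided $C=\frac{\binomial{n}{\ell}\binomial{\ell}{m}^2}{\binomial{n}{m}\binomial{n}{\ell-m}}$. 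This last identity is a routine factorial manipulation: expanding all four binomials, both expressions for $C$ reduce to $\frac{\ell!\,(n-m)!\,(n-\ell+m)!}{m!\,(\ell-m)!\,(n-\ell)!\,n!}$.

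The genuinely load-bearing inequality is Proposition~\ref{l845097}; the only delicate choices here are (i) transposing before applying the Hadamard bound, so that the resulting estimate is a product over \emph{rows} $j\in I$ and hence equals $m!\,g(I)^{1/2}$, and (ii) passing to the square-root weights $G_j$, so that $\bigl((G*_m h)(J)\bigr)^2$ on the left of Proposition~\ref{l845097} matches $(m!)^{-2}\abs{\per(Z[J,L])}^2$ while $G(I)^2$ reproduces $g(I)$ on the right. I do not expect any serious obstacle beyond keeping this matching, and the constant identity, straight; all the hard analytic content is absorbed into Proposition~\ref{l845097}.
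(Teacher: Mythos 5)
Your proof is correct and follows the paper's own argument essentially step for step: the Laplace expansion of $\per(Z[J,L])$ along the columns in $M$, the Hadamard type bound \eqref{e75870} giving $\abs{\per(Z[I,M])}\leq m!\sqrt{g(I)}$, and then Proposition~\ref{l845097} applied with the square-root weights, combined with the identity \eqref{e31855} for $C$. The only difference is presentational — you make explicit the transposition behind \eqref{e873296} and verify the constant identity by factorial expansion, both of which the paper leaves implicit.
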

%%%%%%%%%%%%%%%%%%%%%%%%%%%%%%%%%%%%%%%%%%%%%%%%%%%%%%%%%%%%%%%%%%%%%%
\begin{proof} Let $J\subseteq\set{n}$ with $\card{J}=\ell$. 
The Laplace expansion for permanents gives 
\begin{align}
\per(Z[J,L])
&=\sum_{\newatop{I\subseteq J}{\card{I}=m}}
\per(Z[I,M])\,\per(Z[J\setminus I,L\setminus M]),\label{e7386509}
\end{align}
see \citet[Theorem 1.2, page 16]{MR504978}.
From \eqref{e75870}, it follows that, for $I\subseteq J$ 
with $\card{I}=m$, 
\begin{align}\label{e873296}
\Abs{\per(Z[I,M])}
\leq m!\prod_{j\in I}\Bigl(\frac{1}{m}\sum_{r\in M}\abs{z_{j,r}}^2
\Bigr)^{1/2}
=m!\sqrt{g(I)}.
\end{align}
From \eqref{e7386509} and \eqref{e873296}, we get
\begin{align*}
\sum_{\newatop{J\subseteq\set{n}}{\card{J}=\ell}}
\Abs{\per(Z[J,L])}^2
&\leq \sum_{\newatop{J\subseteq\set{n}}{\card{J}=\ell}}
\Bigl(\sum_{\newatop{I\subseteq J}{\card{I}=m}}
\Abs{\per(Z[I,M])}\,h(J\setminus I)\Bigr)^2 \\
&\leq(m!)^2\sum_{\newatop{J\subseteq\set{n}}{\card{J}=\ell}}
\Bigl(\sum_{\newatop{I\subseteq J}{\card{I}=m}}
\sqrt{g(I)}\,h(J\setminus I)\Bigr)^2. 
\end{align*}
Proposition \ref{l845097} together with
\begin{align}\label{e31855}
C=\frac{\binomial{n}{\ell}\binomial{\ell}{m}^2}{
\binomial{n}{m}\binomial{n}{\ell-m}}
\end{align}
implies \eqref{e9843776}.
\end{proof}

%%%%%%%%%%%%%%%%%%%%%%%%%%%%%%%%%%%%%%%%%%%%%%%%%%%%%%%%%%%%%%%%%%%%%%
\begin{remark}\label{r8397698}
Let the assumptions of Proposition \ref{c845097} be valid. 
\begin{enumerate}[(a)]

\item Let $s\in M$. 
If $\abs{z_{j,r}}=\abs{z_{j,s}}$ for all $j\in\set{n}$ and $r\in M$, 
then 
\eqref{e9843776} implies that
\begin{align}\label{e732860}
\sum_{\newatop{J\subseteq\set{n}}{\card{J}=\ell}}\Abs{\per(Z[J,L])}^2
&\leq (m!)^2C
\Bigl(\sum_{\newatop{I\subseteq \set{n}}{\card{I}=m}}
\prod_{j\in I}\abs{z_{j,s}}^2\Bigr)
\sum_{\newatop{J\subseteq\set{n}}{\card{J}=\ell-m}}h(J)^2.
\end{align}
In particular, if $M=\{s\}$, i.e.\ $m=1$, then 
\eqref{e732860} reduces to \eqref{e37876}.

\item If $m=\ell$, i.e.\ $M=L$, then \eqref{e9843776} gives 
\begin{align} \label{e718476}
\sum_{\newatop{J\subseteq\set{n}}{\card{J}=\ell}}\Abs{\per(Z[J,L])}^2
\leq(\ell!)^2 
\Bigl(\sum_{\newatop{I\subseteq \set{n}}{\card{I}=\ell}}g(I)\Bigr),
\end{align}
which also easily follows from \eqref{e75870}. 
In the case $m=\ell=n$, i.e.\ $M=L=\set{n}$, \eqref{e718476}
reduces to \eqref{e75870} applied to the transpose of $Z$. 
\end{enumerate}
\end{remark}
%%%%%%%%%%%%%%%%%%%%%%%%%%%%%%%%%%%%%%%%%%%%%%%%%%%%%%%%%%%%%%%%%%%%%%

The next result is a generalization of Theorem~\ref{t762986}. 
%%%%%%%%%%%%%%%%%%%%%%%%%%%%%%%%%%%%%%%%%%%%%%%%%%%%%%%%%%%%%%%%%%%%%%

\begin{theorem}\label{c628560}
Let $n\in\NN$, $\emptyset\neq L\subseteq\set{n}$, $\ell=\card{L}$, 
$d\in\set{\ell}$, 
$M_1,\dots,M_d\subseteq L$ be pairwise disjoint sets with 
$\bigcup_{k=1}^dM_k=L$ and $m_k=\card{M_k}\in\NN$ for $k\in\set{d}$.
Let $Z=(z_{j,r})\in\CC^{\set{n}\times L}$ 
and $\alpha_{j,k}=\frac{1}{m_k}\sum_{r\in M_k}\abs{z_{j,r}}^2$
for $j\in\set{n}$ and $k\in\set{d}$. Then 
\begin{align}\label{e84376}
\sum_{\newatop{J\subseteq\set{n}}{\card{J}=\ell}}
\Abs{\per(Z[J,L])}^2
&\leq (\ell!)^2\binomial{n}{\ell}
\prod_{k=1}^d\Bigl(\frac{1}{\binomial{n}{m_k}}
\sum_{\newatop{I\subseteq \set{n}}{\card{I}=m_k}}
\prod_{j\in I}\alpha_{j,k}\Bigr).
\end{align}
\end{theorem}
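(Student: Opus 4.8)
The plan is to prove \eqref{e84376} by induction on the number $d$ of blocks, peeling off one block at a time by means of Proposition~\ref{c845097}. The underlying idea is that Proposition~\ref{c845097} lets me split off the column-block $M_d$: the permanents over $L$ are controlled by the $M_d$-contribution times the permanents over $L\setminus M_d$, and the latter sum is exactly an instance of \eqref{e84376} with one fewer block.

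For the base case $d=1$ I have $M_1=L$ and $m_1=\ell$, so $\binomial{n}{m_1}=\binomial{n}{\ell}$ and the right-hand side of \eqref{e84376} collapses to $(\ell!)^2\sum_{\newatop{I\subseteq\set{n}}{\card{I}=\ell}}\prod_{j\in I}\alpha_{j,1}$. This is precisely \eqref{e718476} from Remark~\ref{r8397698}(b); equivalently, it is Proposition~\ref{c845097} with $M=L$ and $m=\ell$, where the trailing factor over sets $K$ of cardinality $\ell-m=0$ reduces to the squared empty permanent $1$ and $C(\ell,\ell,n)=1$. So the base case is already in hand.

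For the inductive step I assume $d\geq2$ and that \eqref{e84376} holds for every partition into $d-1$ blocks. Setting $M=M_d$, $m=m_d$, and $L'=L\setminus M_d=\bigcup_{k=1}^{d-1}M_k$ with $\ell'=\card{L'}=\ell-m_d$, I would apply Proposition~\ref{c845097} for this $M$. Since there $g(I)=\prod_{j\in I}\alpha_{j,d}$ and $h(K)=\abs{\per(Z[K,L'])}$, it gives
\[
\sum_{\newatop{J\subseteq\set{n}}{\card{J}=\ell}}\Abs{\per(Z[J,L])}^2
\leq (m_d!)^2\,C(\ell,m_d,n)\Bigl(\sum_{\newatop{I\subseteq\set{n}}{\card{I}=m_d}}\prod_{j\in I}\alpha_{j,d}\Bigr)
\sum_{\newatop{K\subseteq\set{n}}{\card{K}=\ell'}}\Abs{\per(Z[K,L'])}^2.
\]
The final sum is exactly the left-hand side of \eqref{e84376} for the matrix $Z[\set{n},L']$ and the $(d-1)$-block partition $M_1,\dots,M_{d-1}$ of $L'$; here $d-1\in\set{\ell'}$ because these blocks are nonempty. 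Invoking the induction hypothesis bounds it by $(\ell'!)^2\binomial{n}{\ell'}\prod_{k=1}^{d-1}\bigl(\frac{1}{\binomial{n}{m_k}}\sum_{\newatop{I\subseteq\set{n}}{\card{I}=m_k}}\prod_{j\in I}\alpha_{j,k}\bigr)$.

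The only remaining task, and the step I would be most careful about, is verifying that the accumulated constant matches exactly. Substituting the induction hypothesis and using the identity \eqref{e31855} in the form $C(\ell,m_d,n)=\binomial{n}{\ell}\binomial{\ell}{m_d}^2/(\binomial{n}{m_d}\binomial{n}{\ell'})$ together with $\binomial{\ell}{m_d}=\ell!/(m_d!\,\ell'!)$, I expect the clean cancellation
\[
(m_d!)^2\,C(\ell,m_d,n)\,(\ell'!)^2\binomial{n}{\ell'}=(\ell!)^2\binomial{n}{\ell}\frac{1}{\binomial{n}{m_d}},
\]
so that the product over $k\in\set{d-1}$ combines with the $k=d$ factor $\frac{1}{\binomial{n}{m_d}}\sum_{\card{I}=m_d}\prod_{j\in I}\alpha_{j,d}$ to reproduce precisely the right-hand side of \eqref{e84376}. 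This binomial bookkeeping is the sole genuine computation; it is elementary but is exactly where the definition of $C$ is essential, and it is the point at which the whole induction either closes or fails, so it is what I would check most carefully.
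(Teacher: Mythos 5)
Your proposal is correct and follows essentially the same route as the paper's own proof: induction on $d$ with base case \eqref{e718476}, the inductive step obtained by applying Proposition~\ref{c845097} with $M=M_d$, and the identical binomial bookkeeping via \eqref{e31855}, which the paper records as $(m_d!)^2C(\ell,m_d,n)\,((\ell-m_d)!)^2\binomial{n}{\ell-m_d}\binomial{n}{m_d}=(\ell!)^2\binomial{n}{\ell}$, matching your cancellation. Your side remarks (that $d-1\in\set{\ell'}$ since the remaining blocks are nonempty, and the verification of the constant) are exactly the details the paper's argument relies on.
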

%%%%%%%%%%%%%%%%%%%%%%%%%%%%%%%%%%%%%%%%%%%%%%%%%%%%%%%%%%%%%%%%%%%%%%
\begin{proof}
We use induction over $d$. For $d=1$, i.e.\ $M_1=L$ and $m_1=\ell$, 
\eqref{e84376} immediately follows from \eqref{e718476}.
In the proof of the assertion for general 
$d\in\set{\ell}\setminus\{1\}$ for $\ell\geq 2$,  
we assume its validity for $d-1$. Then \eqref{e9843776} gives 
\begin{align*}
\sum_{\newatop{J\subseteq\set{n}}{\card{J}=\ell}}
\Abs{\per(Z[J,L])}^2
&\leq (m_d!)^2C(\ell,m_d,n)
\Bigl(\sum_{\newatop{I\subseteq \set{n}}{\card{I}=m_d}}
\prod_{j\in I}\alpha_{j,d}\Bigr)
\sum_{\newatop{J\subseteq\set{n}}{\card{J}=\ell-m_d}}
\Abs{\per(Z[J,L\setminus M_d])}^2\\
&\leq C'(\ell,m_d,n)
\prod_{k=1}^{d}\Bigl(\frac{1}{\binomial{n}{m_k}}
\sum_{\newatop{I\subseteq \set{n}}{\card{I}=m_k}}
\prod_{j\in I}\alpha_{j,k}\Bigr),
\end{align*}
where, by using \eqref{e31855},
\begin{align*}
C'(\ell,m_d,n)
&=(m_d!)^2C(\ell,m_d,n)((\ell-m_d)!)^2\binomial{n}{\ell-m_d}
\binomial{n}{m_d}
=(\ell!)^2\binomial{n}{\ell}.
\end{align*}
This completes the proof of \eqref{e84376}.  
\end{proof}
%%%%%%%%%%%%%%%%%%%%%%%%%%%%%%%%%%%%%%%%%%%%%%%%%%%%%%%%%%%%%%%%%%%%%%
\begin{remark} 
\begin{enumerate}[(a)]

\item If $\ell=\card{L}=n$, i.e.\ $L=\set{n}$, then 
Theorem~\ref{c628560} reduces to  Theorem \ref{t762986}. 
If $d=1$, $M_1=L$, and $m_1=\ell$, 
then \eqref{e84376} easily follows from \eqref{e75870}. 
On the other hand, if $d=\ell$ and 
$m_1=\dots=m_\ell=1$, then \eqref{e84376} reduces to~\eqref{e738659}. 

\item 
In the proof of Proposition \ref{c845097}, the Hadamard type 
inequality \eqref{e75870} has been used, see \eqref{e873296}. 
However, in the case $m=1$, \eqref{e873296} is trivially valid.
Therefore, in the case $m_1=\dots=m_d=1$, Theorem~\ref{c628560} 
is proved without using \eqref{e75870}. 
Furthermore, if $d=\ell=n$ and 
$m_1=\dots=m_n=1$, Theorem~\ref{c628560} reduces to \eqref{e75870}. 
In this respect, the present paper is self-contained. 

\item The proof of Proposition \ref{c845097} is heavily based on 
\eqref{e873296}. However, instead we could use other inequalities 
of the form
\begin{align} \label{e7825876}
\Abs{\per(Z[I,M])}
\leq m!\sqrt{\widetilde{g}(I)},
\end{align}
for $I\subseteq J\subseteq\set{n}$ with $\card{J}=\ell$ and 
$\card{I}=m$, where $\widetilde{g}(I)=\prod_{j\in I}\widetilde{g}_j$
and $\widetilde{g}_1,\dots,\widetilde{g}_n\in[0,\infty)$. 
For instance, in the case $Z\in\{0,1\}^{\set{n}\times L}$, 
\eqref{e7825876} is valid with
\begin{align*}
\widetilde{g}_j
=\Bigl(\frac{\eta(\lambda_j)}{(m!)^{1/m}}\Bigr)^2,
\end{align*}
where $j\in\set{n}$, $\lambda_j=\sum_{r\in M} z_{j,r}$, 
$\eta(k)=(k!)^{1/k}$ for  $k\in\NN$, and $\eta(0)=0$.
This is a consequence of the Br\'{e}gman-Minc permanent inequality, 
which says that, for $Z\in\{0,1\}^{n\times n}$, 
\begin{align*} 
\per(Z)\leq \prod_{j=1}^n\eta\Bigl(\sum_{r=1}^nz_{j,r}\Bigr),
\end{align*}
cf.\ \cite{MR0327788} and \cite{MR0155843}. 
Using an inequality like \eqref{e7825876}
instead of \eqref{e873296}, one would be able to show 
new inequalities similar to those given in 
Proposition~\ref{c845097} and Theorems~\ref{c628560},~\ref{t762986}. 
However, we do not follow this idea here.
\end{enumerate}
\end{remark}  

%%%%%%%%%%%%%%%%%%%%%%%%%%%%%%%%%%%%%%%%%%%%%%%%%%%%%%%%%%%%%%%%%%%%%%
\section{Proof of Proposition \ref{l845097} and 
Remark \ref{r84876}}\label{s376598}
%%%%%%%%%%%%%%%%%%%%%%%%%%%%%%%%%%%%%%%%%%%%%%%%%%%%%%%%%%%%%%%%%%%%%%
The proof of Proposition \ref{l845097} is based on a generalization 
of the approach used by \citet[Section 3]{MR2275869} in 
the second proof of their Theorem~1.1. 
Because of our general assumptions, our proof
is somewhat technical. We need the following two lemmata.
As usual, for $x,y\in\RR$, let $x\wedge y=\min\{x,y\}$
and $x\vee y=\max\{x,y\}$. 
%%%%%%%%%%%%%%%%%%%%%%%%%%%%%%%%%%%%%%%%%%%%%%%%%%%%%%%%%%%%%%%%%%%%%%
\begin{lemma}\label{l47687}
If $x,y\in\CC$ and $m,n\in\Zpl$, then
\begin{align*} 
\sum_{k=0}^{m\wedge n}\binomial{x}{m-k}
\binomial{y}{n-k}\binomial{x+y+k}{k}
=\binomial{x+n}{m}\binomial{y+m}{n}.
\end{align*}
\end{lemma}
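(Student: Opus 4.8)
The plan is to recognize the left-hand side as a terminating, balanced (Saalschützian) hypergeometric series of type ${}_3F_2$ evaluated at $1$, and then to invoke the Pfaff--Saalschütz summation theorem. This is legitimate because, for fixed $m,n\in\Zpl$, both sides are polynomials in $x$ and $y$: on the left the summand of index $k$ has total degree $m+n-k$ in $(x,y)$, and on the right the product has total degree $m+n$. Hence it suffices to prove the identity for all complex $x,y$ outside the finite exceptional set on which the intermediate rational expressions degenerate, and the general statement then follows by the polynomial identity principle. Equivalently, I may treat $x,y$ as indeterminates and read every generalized binomial coefficient through its rising-factorial expression $\binomial{x}{m}=(x-m+1)_m/m!$.

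First I would put the sum into standard form. Since $\binomial{x}{m-k}=0$ for $k>m$ and $\binomial{y}{n-k}=0$ for $k>n$, the summation terminates at $k=m\wedge n$ on its own, with initial term $t_0=\binomial{x}{m}\binomial{y}{n}$. Computing the consecutive ratio of summands gives
\[
\frac{t_{k+1}}{t_k}=\frac{(m-k)(n-k)(x+y+1+k)}{(x-m+1+k)(y-n+1+k)(k+1)},
\]
so the left-hand side equals $\binomial{x}{m}\binomial{y}{n}$ times ${}_3F_2(-m,-n,x+y+1;\,x-m+1,y-n+1;\,1)$. The crucial observation is that this series is balanced: the sum of its lower parameters is $(x-m+1)+(y-n+1)=x+y-m-n+2$, which exceeds the sum of its upper parameters, $-m-n+(x+y+1)$, by exactly $1$, and the upper parameter $-m$ (equivalently $-n$) is a non-positive integer. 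These are exactly the hypotheses under which Pfaff--Saalschütz applies.

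Applying the theorem with terminating parameter $-m$, and abbreviating $b=-n$, $c=x+y+1$, $d=x-m+1$, evaluates the series as $(d-b)_m(d-c)_m/\bigl((d)_m(d-b-c)_m\bigr)$, where $d-b=x+n-m+1$, $d-c=-(y+m)$, and $d-b-c=n-m-y$. The remaining task is bookkeeping. The factor $\binomial{x}{m}/(d)_m$ equals $1/m!$, after which $(d-b)_m/m!=(x+n-m+1)_m/m!=\binomial{x+n}{m}$ splits off cleanly. What is left is $\binomial{y}{n}\,(d-c)_m/(d-b-c)_m$, and using the reflection $(-a)_m=(-1)^m a(a-1)\cdots(a-m+1)$ to rewrite $(d-c)_m=(-(y+m))_m$ and $(d-b-c)_m=(n-m-y)_m$ (so that the two sign factors cancel), this collapses to $\binomial{y+m}{n}$. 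Multiplying the two pieces gives the asserted right-hand side $\binomial{x+n}{m}\binomial{y+m}{n}$.

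I expect the main obstacle to be not conceptual but clerical: once the balanced ${}_3F_2$ structure is spotted the result is immediate from Pfaff--Saalschütz, but the Pochhammer simplification requires careful sign tracking through the reflection formula, and one must remember to justify the passage from the generic case to all complex $x,y$ via polynomiality. If a self-contained argument is preferred that avoids citing Pfaff--Saalschütz, one can instead note that the left-hand side is the coefficient $[u^m v^n]$ of the bivariate generating function $(1+u)^x(1+v)^y(1-uv)^{-(x+y+1)}$ and evaluate this coefficient directly; this reformulation carries the same content, though the coefficient extraction is no shorter than the hypergeometric route.
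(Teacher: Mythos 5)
Your proof is correct and takes essentially the same route as the paper, which likewise obtains the identity as an immediate consequence of the Pfaff--Saalsch\"utz summation for a terminating balanced ${}_3F_2$ series; your term-ratio computation, parameter matching ($a=-n$, $b=x+y+1$, $c=x-m+1$, second lower parameter $y-n+1$), and Pochhammer bookkeeping all check out. The only difference is that you carry out in full the details (termination, the Saalsch\"utzian balance condition, the sign tracking, and the extension to all complex $x,y$ by polynomiality) that the paper delegates to its cited references.
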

%%%%%%%%%%%%%%%%%%%%%%%%%%%%%%%%%%%%%%%%%%%%%%%%%%%%%%%%%%%%%%%%%%%%%%
\begin{proof} 
This is an immediate consequence of the Pfaff-Saalsch\"utz 
identity from the theory of hypergeometric series, e.g.\ see 
\citet[Formula (1)]{MR993660} or \citet[Formula (19)]{MR1313292}. 
Another independent and short proof can be found 
in \citet[proof of Formula (1)]{MR773560}. 
\end{proof}
%%%%%%%%%%%%%%%%%%%%%%%%%%%%%%%%%%%%%%%%%%%%%%%%%%%%%%%%%%%%%%%%%%%%%%
The previous lemma can easily be used to prove the next result.
We note that in the first attempt to prove Proposition \ref{l845097},
one task was to find non-negative numbers $f(a,b)$ satisfying 
\eqref{e87396} and \eqref{e7255}, which was not that easy. 
%%%%%%%%%%%%%%%%%%%%%%%%%%%%%%%%%%%%%%%%%%%%%%%%%%%%%%%%%%%%%%%%%%%%%%

\begin{lemma}\label{l47897}
Let $n\in\NN$, $\ell,m\in\Zpl$ with $m\leq \ell\leq n$. Let 
\begin{align*}
C=C(\ell ,m,n)
=\frac{\binomial{\ell }{m}\binomial{n-\ell +m}{m}}{
\binomial{n}{m}}, \qquad
f(a,b)=f_{\ell,m,n}(a,b)
=\frac{\binomial{n-\ell }{m-a-b}\binomial{\ell }{b}}{
\binomial{m-a}{b}^2 \binomial{n}{m-a}}
\end{align*}
for $(a,b)\in \Zpl^2$ with $a+b\leq m$. 
Then we always have $f(a,b)\in[0,\infty)$ and,
for all $a\in\{0,\dots,m\}$, 
\begin{align}\label{e87396}
\sum_{b=0}^{m-a}f(a,b)\binomial{m-a}{b}^2=1.
\end{align}
Further, for all $b\in\Zpl$ with $b\leq m\wedge(\ell -m)$, 
\begin{align}\label{e7255}
\sum_{a=0\vee(2m-\ell)}^{m-b} f(a,b)\binomial{m-b}{a}
\binomial{\ell -m-b}{m-a-b}\binomial{n-\ell +b}{b}
=C.
\end{align}
\end{lemma}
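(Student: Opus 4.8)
The plan is to prove Lemma~\ref{l47897} by recognizing both identities \eqref{e87396} and \eqref{e7255} as instances of the Pfaff--Saalsch\"utz identity packaged in Lemma~\ref{l47687}, after suitable reindexing. The non-negativity of $f(a,b)$ for $(a,b)\in\Zpl^2$ with $a+b\leq m$ is routine: each binomial coefficient appearing in the numerator and denominator has a non-negative integer top argument at least as large as its (non-negative) bottom argument, so every factor is a positive integer and the quotient lies in $[0,\infty)$. The substantive work is the two summation identities, and in each case the strategy is to strip off the factors that do not depend on the summation index, then match the remaining sum to the left-hand side of Lemma~\ref{l47687} by choosing $x$, $y$, $m$, $n$ appropriately.

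For \eqref{e87396}, I would fix $a\in\{0,\dots,m\}$ and write out
\[
\sum_{b=0}^{m-a}f(a,b)\binomial{m-a}{b}^2
=\sum_{b=0}^{m-a}
\frac{\binomial{n-\ell}{m-a-b}\binomial{\ell}{b}}{\binomial{n}{m-a}}.
\]
Here the two factors $\binomial{m-a}{b}^{-2}$ in $f$ cancel the explicit $\binomial{m-a}{b}^2$, which is the reason $f$ was defined with that denominator in the first place. The remaining sum $\sum_{b}\binomial{n-\ell}{m-a-b}\binomial{\ell}{b}$ is a Vandermonde convolution summing to $\binomial{n}{m-a}$, which cancels the denominator and yields $1$. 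So \eqref{e87396} reduces to the ordinary Vandermonde identity and does not even require the full Pfaff--Saalsch\"utz strength; I would either invoke Vandermonde directly or obtain it as the $k=0$-supported specialization implicit in Lemma~\ref{l47687}.

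The harder identity is \eqref{e7255}, and this is where Lemma~\ref{l47687} is genuinely needed. I would fix $b\in\Zpl$ with $b\leq m\wedge(\ell-m)$, pull the index-independent factor $\binomial{n-\ell+b}{b}$ and the pieces of $f(a,b)$ that do not involve $a$ out of the sum over $a$, and reindex via $k=m-a-b$ (so that $a$ running from $0\vee(2m-\ell)$ to $m-b$ corresponds to $k$ running over the natural range $0$ to $(m-b)\wedge(\ell-m-b)$). The goal is to cast $\sum_a$ into the shape $\sum_k\binomial{x}{m'-k}\binomial{y}{n'-k}\binomial{x+y+k}{k}$ for appropriate $x,y,m',n'$, whereupon Lemma~\ref{l47687} collapses it to a product of two binomials; after reassembling the prefactors, the $f(a,b)$-denominators $\binomial{m-a}{b}^2\binomial{n}{m-a}$ and the $\ell$- and $n$-dependent binomials should telescope down to exactly $C=\binomial{\ell}{m}\binomial{n-\ell+m}{m}/\binomial{n}{m}$, independent of $b$ as claimed.

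The main obstacle I anticipate is purely bookkeeping rather than conceptual: correctly identifying the parameters $x,y$ in Lemma~\ref{l47687} so that the three binomials $\binomial{m-b}{a}$, $\binomial{\ell-m-b}{m-a-b}$, and the surviving factors of $f(a,b)$ line up with $\binomial{x}{m-k}$, $\binomial{y}{n-k}$, and $\binomial{x+y+k}{k}$ after the substitution $k=m-a-b$. One must be especially careful that the $\binomial{m-a}{b}^{-2}$ factor in $f$ does \emph{not} cancel cleanly here (unlike in \eqref{e87396}), so it contributes to the $x,y$ assignment and is presumably the reason the awkward denominator $\binomial{m-a}{b}^2$ was built into $f$ to make both \eqref{e87396} and \eqref{e7255} work simultaneously. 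Verifying that the resulting product of two binomials, once multiplied by all prefactors, is genuinely independent of $b$ and equals $C$ will require one final Vandermonde-type simplification, which I would check by comparing the $b=0$ case against the definition of $C$ and then confirming the general $b$ drops out.
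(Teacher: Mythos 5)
Your proposal follows essentially the same route as the paper's proof: \eqref{e87396} is obtained exactly as you describe, by cancelling $\binomial{m-a}{b}^2$ against the denominator of $f(a,b)$ and applying Vandermonde, while \eqref{e7255} is reduced to Lemma~\ref{l47687} by absorbing the non-cancelling factors $\binomial{m-a}{b}^{-2}\binomial{n}{m-a}^{-1}$ into an algebraic rewriting of the summand as $C$ times a Pfaff--Saalsch\"utz-shaped term --- the paper's bookkeeping needs no reindexing at all, taking $k=a$ with $x=n-\ell$, $y=\ell-m$ and the lemma's $(m,n)$ replaced by $(m-b,m)$, so that the sum collapses to $\binomial{n-\ell+m}{m-b}\binomial{\ell-b}{m}$ and cancels as required. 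One small imprecision: your non-negativity argument claims every factor is a positive integer, but $\binomial{n-\ell}{m-a-b}$ vanishes when $m-a-b>n-\ell$ (cf.\ Remark~\ref{r4186}); since the denominator binomials are strictly positive whenever $a+b\leq m\leq n$, the conclusion $f(a,b)\in[0,\infty)$ nevertheless stands.
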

%%%%%%%%%%%%%%%%%%%%%%%%%%%%%%%%%%%%%%%%%%%%%%%%%%%%%%%%%%%%%%%%%%%%%%
\begin{proof}
It is clear that always $f(a,b)\in[0,\infty)$. 
Using Vandermonde's identity for binomial coefficients, we obtain
\begin{align*}
\sum_{b=0}^{m-a}f(a,b)\binomial{m-a}{b}^2
=\frac{1}{\binomial{n}{m-a}}
\sum_{b=0}^{m-a}\binomial{n-\ell }{m-a-b}\binomial{\ell }{b}=1. 
\end{align*}
Further, for $a,b\in\Zpl$ with $b\leq m\wedge(\ell -m)$
and $0\vee(2m-\ell )\leq a\leq m-b$, it is easily shown that
\begin{align*}
f(a,b)\binomial{m-b}{a}
\binomial{\ell -m-b}{m-a-b}\binomial{n-\ell +b}{b}
=C\frac{\binomial{n-\ell }{m-a-b}\binomial{\ell -m}{m-a}
\binomial{n-m+a}{a}}{\binomial{n-\ell +m}{m-b}\binomial{\ell -b}{m}}.
\end{align*}
This together with Lemma \ref{l47687} implies \eqref{e7255}. 
\end{proof}
%%%%%%%%%%%%%%%%%%%%%%%%%%%%%%%%%%%%%%%%%%%%%%%%%%%%%%%%%%%%%%%%%%%%%%
\begin{remark} \label{r4186}
In Lemma~\ref{l47897}, we have $f(a,b)>0$ if and only if 
$m-a-b\leq n-\ell$.
\end{remark}
%%%%%%%%%%%%%%%%%%%%%%%%%%%%%%%%%%%%%%%%%%%%%%%%%%%%%%%%%%%%%%%%%%%%%%
\begin{proof}[Proof of Proposition \ref{l845097}]
In \eqref{e9843776aa}, equality holds for $m=0$. 
In what follows, let us assume that $1\leq m\leq \ell\leq n$. 
Let $J\subseteq\set{n}$ with $\card{J}=\ell$. 
Then
\begin{align}
((g*_m h)(J))^2
&=\Bigl(\sum_{\newatop{I\subseteq J}{\card{I}=m}}
g(I)h(J\setminus I)\Bigr)^2\nonumber\\
&=\sum_{\newatop{I_1\subseteq J}{\card{I_1}=m}}
\sum_{\newatop{I_2\subseteq J}{\card{I_2}=m}}
g(I_1)g(I_2)h(J\setminus I_1)h(J\setminus I_2)\nonumber\\
&= \sum_{a=0\vee(2m-\ell)}^m
\sum_{\newatop{D\subseteq J}{\card{D}=a}}
\sum_{\newatop{J_1\subseteq J\setminus D}{\card{J_1}=m-a}}
\sum_{\newatop{J_2\subseteq J\setminus(D\cup J_1)}{
\card{J_2}=m-a}} g(D)^2\nonumber\\
&\quad{}\times g(J_1)g(J_2)
h(J\setminus (D\cup J_1))h(J\setminus (D\cup J_2)),\label{e7215286}
\end{align}
where, in \eqref{e7215286}, we changed variables, namely 
$I_1=D\cup J_1$, $I_2=D\cup J_2$, with $D\subseteq J$, 
$\card{D}=a$, $J_1,J_2\subseteq J\setminus D$, and
$J_1\cap J_2=\emptyset$. 
Here, $\card{D\cup J_1\cup J_2}
=\card{I_1\cup (I_2\setminus D)}=2m-a\leq \ell$, 
such that $a\in\{0\vee(2m-\ell),\dots,m\}$. 
For $(a,b)\in\Zpl^2$ with $a+b\leq m$, 
let $f(a,b)=f_{\ell,m,n}(a,b)$ be defined as in Lemma \ref{l47897}. 
For $J_1,J_2\subseteq J$ with $\card{J_1}=m-a=\card{J_2}$, we have 
\begin{align*}
\sum_{b=0}^{m-a}f(a,b)
\sum_{\newatop{I_1\subseteq J_1}{\card{I_1}=m-a-b}}
\sum_{\newatop{I_2\subseteq J_2}{\card{I_2}=b}}1
=\sum_{b=0}^{m-a}f(a,b)\binomial{m-a}{b}^2=1,
\end{align*}
and therefore 
\begin{align*}
((g*_m h)(J))^2
&=\sum_{a=0\vee(2m-\ell)}^m
\sum_{\newatop{D\subseteq J}{\card{D}=a}}
\sum_{\newatop{J_1\subseteq J\setminus D}{\card{J_1}=m-a}}
\sum_{\newatop{J_2\subseteq J\setminus(D\cup J_1)}{
\card{J_2}=m-a}}\sum_{b=0}^{m-a}f(a,b)\\
&\quad{}\times
\sum_{\newatop{I_1\subseteq J_1}{\card{I_1}=m-a-b}}
\sum_{\newatop{I_2\subseteq J_2}{\card{I_2}=b}}
g(D\cup I_1)g(I_2)h(J\setminus (D\cup J_1))\\
&\quad{}\times g(J_1\setminus I_1)g(D\cup (J_2\setminus I_2))
h(J\setminus (D\cup J_2)). 
\end{align*}
Now we use the inequality $xy\leq \frac{1}{2}(x^2+y^2)$ for 
$x,y\in[0,\infty)$ and obtain
\begin{align}
((g*_m h)(J))^2
&\leq \frac{1}{2}\sum_{b=0}^{m\wedge(\ell-m)}
\sum_{a=0\vee(2m-\ell)}^{m-b}f(a,b)
\sum_{\newatop{D\subseteq J}{\card{D}=a}}
\sum_{\newatop{J_1\subseteq J\setminus D}{\card{J_1}=m-a}}
\sum_{\newatop{J_2\subseteq J\setminus(D\cup J_1)}{
\card{J_2}=m-a}}\nonumber\\
&\quad{}\times
\sum_{\newatop{I_1\subseteq J_1}{\card{I_1}=m-a-b}}
\sum_{\newatop{I_2\subseteq J_2}{\card{I_2}=b}}
\Bigl(\bigl(g(D\cup I_1)g(I_2)h(J\setminus (D\cup J_1))
\bigr)^2\nonumber\\
&\qquad\qquad{}+
\bigl(g(J_1\setminus I_1)g(D\cup(J_2\setminus I_2))
h(J\setminus (D\cup J_2))\bigr)^2\Bigr).\label{e7629608} 
\end{align}
This gives $((g*_m h)(J))^2\leq\frac{1}{2}(T_1(J)+T_2(J))$, where 
\begin{align*}
T_1(J)&=\sum_{b=0}^{m\wedge(\ell-m)}
\sum_{a=0\vee(2m-\ell)}^{m-b}f(a,b)
\sum_{\newatop{D\subseteq J}{\card{D}=a}}
\sum_{\newatop{J_1\subseteq J\setminus D}{\card{J_1}=m-a}}
\sum_{\newatop{J_2\subseteq J\setminus(D\cup J_1)}{
\card{J_2}=m-a}}
\sum_{\newatop{I_1\subseteq J_1}{\card{I_1}=m-a-b}}
\sum_{\newatop{I_2\subseteq J_2}{\card{I_2}=b}}\\
&\quad{}\times 
\bigl(g(D\cup I_1)g(I_2)h(J\setminus (D\cup J_1))\bigr)^2,\\
T_2(J)&=\sum_{b=0}^{m\wedge(\ell-m)}
\sum_{a=0\vee(2m-\ell)}^{m-b}f(a,b)
\sum_{\newatop{D\subseteq J}{\card{D}=a}}
\sum_{\newatop{J_1\subseteq J\setminus D}{\card{J_1}=m-a}}
\sum_{\newatop{J_2\subseteq J\setminus(D\cup J_1)}{
\card{J_2}=m-a}}
\sum_{\newatop{I_1\subseteq J_1}{\card{I_1}=m-a-b}}
\sum_{\newatop{I_2\subseteq J_2}{\card{I_2}=b}}\\
&\quad{}\times
\bigl(g(J_1\setminus I_1)g(D\cup(J_2\setminus I_2))
h(J\setminus (D\cup J_2))\bigr)^2.
\end{align*}
By symmetry, we have $T_1(J)=T_2(J)$. In fact, 
\begin{align}
T_1(J)&=\sum_{b=0}^{m\wedge(\ell-m)}
\sum_{a=0\vee(2m-\ell)}^{m-b}f(a,b)
\sum_{\newatop{D\subseteq J}{\card{D}=a}}
\sum_{\newatop{J_1\subseteq J\setminus D}{\card{J_1}=m-a}}
\sum_{\newatop{J_2\subseteq J\setminus(D\cup J_1)}{
\card{J_2}=m-a}}
\sum_{\newatop{I_2\subseteq J_2}{\card{I_2}=m-a-b}}
\sum_{\newatop{I_1\subseteq J_1}{\card{I_1}=b}}\nonumber\\
&\quad{}\times \bigl(g(D\cup I_2)g(I_1)h(J\setminus (D\cup J_2))
\bigr)^2\label{e732770}\\
&=\sum_{b=0}^{m\wedge(\ell-m)}
\sum_{a=0\vee(2m-\ell)}^{m-b}f(a,b)
\sum_{\newatop{D\subseteq J}{\card{D}=a}}
\sum_{\newatop{J_1\subseteq J\setminus D}{\card{J_1}=m-a}}
\sum_{\newatop{J_2\subseteq J\setminus(D\cup J_1)}{
\card{J_2}=m-a}}
\sum_{\newatop{I_2'\subseteq J_2}{\card{I_2'}=b}}
\sum_{\newatop{I_1'\subseteq J_1}{\card{I_1'}=m-a-b}}\nonumber\\
&\quad{}\times
\bigl(g( J_1\setminus I_1')g(D\cup(J_2\setminus I_2'))
h(J\setminus (D\cup J_2))\bigr)^2\label{e732771}\\
&=T_2(J).\nonumber
\end{align}
Here, \eqref{e732770} follows by interchanging $J_1$ with $J_2$ 
and $I_1$ with $I_2$; further, in \eqref{e732771},
we changed variables, that is, $I_1=J_1\setminus I_1'$ and
$I_2=J_2\setminus I_2'$. Consequently, 
\begin{align}\label{e635581}
\sum_{\newatop{J\subseteq\set{n}}{\card{J}=\ell}}((g*_m h)(J))^2
&\leq T_0
:=\sum_{\newatop{J\subseteq\set{n}}{\card{J}=\ell}}T_1(J)
=\sum_{b=0}^{m\wedge(\ell-m)}
\sum_{a=0\vee(2m-\ell)}^{m-b}f(a,b)T_3(a,b),
\end{align}
where, for $a,b\in\Zpl$ with $b\leq m\wedge(\ell-m)$
and $2m-\ell\leq a\leq m-b$, 
\begin{align}
T_3(a,b)
&=\sum_{\newatop{J\subseteq\set{n}}{\card{J}=\ell}}
\sum_{\newatop{D\subseteq J}{\card{D}=a}}
\sum_{\newatop{J_1\subseteq J\setminus D}{\card{J_1}=m-a}}
\sum_{\newatop{J_2\subseteq J\setminus(D\cup J_1)}{
\card{J_2}=m-a}}
\sum_{\newatop{I_1\subseteq J_1}{\card{I_1}=m-a-b}}
\sum_{\newatop{I_2\subseteq J_2}{\card{I_2}=b}}\nonumber\\
&\quad{}\times
\bigl(g(D\cup I_1)g(I_2)h(J\setminus (D\cup J_1))
\bigr)^2\nonumber\\
&=\sum_{\newatop{J\subseteq\set{n}}{\card{J}=\ell}}
\sum_{\newatop{G\subseteq J}{\card{G}=m}}
\sum_{\newatop{D\subseteq G}{\card{D}=a}}
\sum_{\newatop{J_2\subseteq J\setminus G}{\card{J_2}=m-a}}
\sum_{\newatop{I_1\subseteq G\setminus D}{\card{I_1}=m-a-b}}
\sum_{\newatop{I_2\subseteq J_2}{\card{I_2}=b}}
\bigl(g(D\cup I_1)g(I_2)h(J\setminus G)\bigr)^2\label{eq71765}\\
&=\sum_{\newatop{G\subseteq \set{n}}{\card{G}=m}}
\sum_{\newatop{J'\subseteq\set{n}\setminus G}{\card{J'}=\ell-m}}
\sum_{\newatop{D\subseteq G}{\card{D}=a}}
\sum_{\newatop{I_1\subseteq G\setminus D}{\card{I_1}=m-a-b}}
\sum_{\newatop{J_2\subseteq J'}{\card{J_2}=m-a}}
\sum_{\newatop{I_2\subseteq J_2}{\card{I_2}=b}}
\bigl(g(D\cup I_1)g(I_2)h(J')\bigr)^2.\label{eq71766}
\end{align}
In \eqref{eq71765} and \eqref{eq71766}, we changed variables, i.e.\ 
$J_1=G\setminus D$ and $J=J'\cup G$, respectively. 
Now, letting $I_1=I_1'\setminus D$ and $J_2=I_2\cup J_2'$, we get
\begin{align}
T_3(a,b)
&=\sum_{\newatop{G\subseteq \set{n}}{\card{G}=m}}
\sum_{\newatop{J'\subseteq\set{n}\setminus G}{\card{J'}=\ell-m}}
\sum_{\newatop{I_1'\subseteq G}{\card{I_1'}=m-b}}
\sum_{\newatop{D\subseteq I_1'}{\card{D}=a}}
\sum_{\newatop{I_2\subseteq J'}{\card{I_2}=b}}
\sum_{\newatop{J_2'\subseteq J'\setminus I_2}{\card{J_2'}=m-a-b}}
\bigl(g(I_1')g(I_2)h(J')\bigr)^2\nonumber\\
&=\sum_{\newatop{G\subseteq \set{n}}{\card{G}=m}}
\sum_{\newatop{J\subseteq\set{n}\setminus G}{\card{J}=\ell-m}}
\sum_{\newatop{I_1\subseteq G}{\card{I_1}=m-b}}
\sum_{\newatop{D\subseteq I_1}{\card{D}=a}}
\sum_{\newatop{I_2\subseteq J}{\card{I_2}=b}}
\sum_{\newatop{J_2\subseteq J\setminus I_2}{\card{J_2}=m-a-b}}
\bigl(g(I_1)g(I_2)h(J)\bigr)^2\nonumber\\
&=\binomial{m-b}{a}\binomial{\ell-m-b}{m-a-b}T_4(b),
\label{e75971}
\end{align}
where
\begin{align*}
T_4(b)
&=\sum_{\newatop{G\subseteq \set{n}}{\card{G}=m}}
\sum_{\newatop{J\subseteq\set{n}\setminus G}{\card{J}=\ell-m}}
\sum_{\newatop{I_1\subseteq G}{\card{I_1}=m-b}}
\sum_{\newatop{I_2\subseteq J}{\card{I_2}=b}}
\bigl(g(I_1)g(I_2)h(J)\bigr)^2.
\end{align*}
Interchanging the first two sums
and changing variables such that $G=I_1\cup G'$, we obtain 
\begin{align}
T_4(b)
&=\sum_{\newatop{J\subseteq\set{n}}{\card{J}=\ell-m}}
\sum_{\newatop{G\subseteq \set{n}\setminus J}{\card{G}=m}}
\sum_{\newatop{I_1\subseteq G}{\card{I_1}=m-b}}
\sum_{\newatop{I_2\subseteq J}{\card{I_2}=b}}
\bigl(g(I_1)g(I_2)h(J)\bigr)^2\nonumber\\
&=\sum_{\newatop{J\subseteq\set{n}}{\card{J}=\ell-m}}
\sum_{\newatop{I_1\subseteq \set{n}\setminus J}{\card{I_1}=m-b}}
\sum_{\newatop{G'\subseteq \set{n}\setminus (J\cup I_1)}{\card{G'}=b}}
\sum_{\newatop{I_2\subseteq J}{\card{I_2}=b}}
\bigl(g(I_1)g(I_2)h(J)\bigr)^2 
=\binomial{n-\ell+b}{b}T_5(b)\label{e648601}
\end{align}
with 
\begin{align*}
T_5(b)
&=\sum_{\newatop{J\subseteq\set{n}}{\card{J}=\ell-m}}
\sum_{\newatop{I_1\subseteq \set{n}\setminus J}{\card{I_1}=m-b}}
\sum_{\newatop{I_2\subseteq J}{\card{I_2}=b}}
\bigl(g(I_1)g(I_2)h(J)\bigr)^2.
\end{align*}
Using \eqref{e635581}, \eqref{e75971}, \eqref{e648601},
and \eqref{e7255}, we get
\begin{align}
T_0
&=\sum_{b=0}^{m\wedge(\ell-m)}
\sum_{a=0\vee(2m-\ell)}^{m-b}f(a,b)T_3(a,b)\nonumber\\
&=\sum_{b=0}^{m\wedge(\ell-m)} 
\sum_{a=0\vee(2m-\ell)}^{m-b}f(a,b)
\binomial{m-b}{a}\binomial{\ell-m-b}{m-a-b}T_4(b)\nonumber\\
&=C\sum_{b=0}^{m\wedge(\ell-m)}T_5(b) 
=C\sum_{\newatop{J\subseteq\set{n}}{\card{J}=\ell-m}}
T_6(J)h(J)^2,\label{e652551}
\end{align}
where $C=C(\ell,m,n)
=\frac{\binomial{\ell}{m}\binomial{n-\ell+m}{m}}{\binomial{n}{m}}$
and, for $J\subseteq\set{n}$ with $\card{J}=\ell-m$, 
\begin{align}
T_6(J)
&=\sum_{b=0}^{m\wedge(\ell-m)}
\sum_{\newatop{I_1\subseteq \set{n}\setminus J}{\card{I_1}=m-b}}
\sum_{\newatop{I_2\subseteq J}{\card{I_2}=b}}
\bigl(g(I_1)g(I_2)\bigr)^2\nonumber\\
&=\sum_{b=0}^{m\wedge(\ell-m)}
\sum_{\newatop{I\subseteq \set{n}}{\card{I\cap J}=b,
\card{I\setminus J}=m-b}}
\bigl(g(I\setminus J)g(I\cap J)\bigr)^2\label{e61921743}\\
&=\sum_{\newatop{I\subseteq \set{n}}{\card{I}=m}}
\bigl(g(I\setminus J)g(I\cap J)\bigr)^2
=\sum_{\newatop{I\subseteq \set{n}}{\card{I}=m}}g(I)^2. 
\label{e76254761}
\end{align}
In \eqref{e61921743}, we changed variables according to 
$I_1=I\setminus J$ and $I_2=I\cap J$. 
Inequality \eqref{e9843776aa} now follows from \eqref{e635581},
\eqref{e652551}, \eqref{e31855}, and \eqref{e76254761}. 
\end{proof}
%%%%%%%%%%%%%%%%%%%%%%%%%%%%%%%%%%%%%%%%%%%%%%%%%%%%%%%%%%%%%%%%%%%%%%

\begin{proof}[Proof of Remark \ref{r84876} (Sufficiency)]
It is easy to verify that, if at least one of the  conditions 
(\ref{r84876.i})--(\ref{r84876.v}) is valid, then, 
in \eqref{e9843776aa}, equality holds.
Further, for $\ell=n$, it is clear that equality in \eqref{e9843776aa}
is equivalent to the existence of a number $x\in[0,\infty)$ such that 
$g(I)=x h(\set{n}\setminus I)$ for all $I\subseteq\set{n}$ 
with $\card{I}=m$. 
\end{proof}
%%%%%%%%%%%%%%%%%%%%%%%%%%%%%%%%%%%%%%%%%%%%%%%%%%%%%%%%%%%%%%%%%%%%%%
\begin{proof}[Proof of Remark \ref{r84876} (Necessity)]
Let us assume that, in \eqref{e9843776aa}, equality holds, where 
conditions (\ref{r84876.i})--(\ref{r84876.iv}) do not hold. 
So, let us assume that $1\leq m<\ell<n$ and sets 
$I_0\subseteq\set{n}$ and $K_0\subseteq\set{n}$ exist such that 
$\card{I_0}=m$, $\prod_{j\in I_0}g_j=g(I_0)>0$,
$\card{K_0}=\ell-m$, and $h(K_0)>0$.  
Our goal is to show that condition (\ref{r84876.v}) is valid, i.e.\ 
$g_1=\dots=g_n$ and $h(K)=h(K')$ for all 
$K,K'\subseteq\set{n}$ with $\card{K}=\card{K'}=\ell-m$. 
\begin{enumerate}[(a)]

\item 
Let us now explain the main argument used here. 
In the proof of Proposition~\ref{l845097} (see~\eqref{e7629608}), 
it was used that 
$xy\leq\frac{1}{2}(x^2+y^2)$ for $x,y\in[0,\infty)$, where 
equality holds if and only if $x=y$. Since no other 
inequalities were used, it follows that 
\begin{align}\label{e1543822}
g(D)g(I_1)g(I_2)h(J\setminus (D\cup J_1))
=g(D)g(J_1\setminus I_1)g(J_2\setminus I_2)h(J\setminus (D\cup J_2)), 
\end{align}
whenever 
$J\subseteq\set{n}$ with $\card{J}=\ell$,
$D\subseteq J$,
$I_1\subseteq J_1\subseteq J\setminus D$,
and $I_2\subseteq J_2\subseteq J\setminus (D\cup J_1)$
with $\card{J_1}=\card{J_2}=m-\card{D}$ and 
$\card{I_1}=\card{J_1}-\card{I_2}\leq n-\ell$.
In this case, we say that $(J,D,J_1,I_1,J_2,I_2)$ is an 
admissible family. In particular, $D$, $J_1$, and $J_2$ are pairwise 
disjoint sets with $D\cup J_1\cup J_2\subseteq J$. 
Let $a=\card{D}$ and $b=\card{I_2}$. Then 
$0\leq b\leq m\wedge(\ell-m)$ and 
$0\vee(2m-\ell)\vee (m-n+\ell-b)\leq a\leq m-b$, 
$\card{J_1}=\card{J_2}=m-a$ and
$\card{I_1}=m-a-b$. 
It is noteworthy that, in the application of \eqref{e7629608},
the values $f(a,b)$ must be strictly positive, so that 
we have to assume the inequality 
$\card{J_1}-\card{I_2}=m-a-b\leq n-\ell$, see Remark~\ref{r4186}.

\item  \label{r84876.b}
In what follows, we construct two special admissible families.
Let $I_2=K_0\cap I_0$ and $b=\card{I_2}$. 
Clearly, we have $0\leq b\leq m\wedge (\ell-m)$. 
Let us choose $D\subseteq I_0\setminus I_2$ with 
$a:=0\vee (2m-\ell)\vee (m-n+\ell-b)
=\card{D}\leq \card{I_0\setminus I_2}=m-b$.
Let $I_1=I_0\setminus(I_2\cup D)$, that is 
$\card{I_1}=m-a-b$. In particular, $I_1$, $I_2$, and $D$ 
are pairwise disjoint sets with
$I_0=I_1\cup I_2\cup D$, $K_0\cap D=\emptyset$,
$I_1\subseteq \set{n}\setminus(K_0\cup D)$, 
and $I_2\subseteq K_0$. 
We choose 
$J_1\subseteq \set{n}\setminus(K_0\cup D)$ and $J_2\subseteq K_0$
such that $I_1\subseteq J_1$, $I_2\subseteq J_2$, and 
$\card{J_1}=m-a=\card{J_2}$. 
We note that 
\begin{align*}
m-a
&=m-(0\vee (2m-\ell)\vee (m-n+\ell-b))\\
&\geq m-(0\vee (m+\ell-1-\ell)\vee (m-1))
=m-(m-1)=1, 
\end{align*}
giving $J_1\neq\emptyset\neq J_2$.
Set $J=K_0\cup J_1\cup D$, that is $\card{J}=\ell-m+m-a+a=\ell$. 
In particular, $(J,D,J_1,I_1,J_2,I_2)$ is admissible. 

Since $D\cup I_1\cup I_2= I_0$ and $J\setminus(D\cup J_1)=K_0$, 
we have 
$g(D)g(I_1)g(I_2)h(J\setminus (D\cup J_1))>0$. 
Using \eqref{e1543822}, we get 
\begin{align*}
g(J_1\setminus I_1)g(J_2\setminus I_2)h(J\setminus (D\cup J_2))>0
\end{align*}
and hence $g(J_1\cup J_2\cup D)>0$.

Let $K_1=J\setminus(D\cup J_2)$. Then $\card{K_1}=\ell-m$
and $h(K_1)>0$. 
Now it is possible to imitate the construction above with $K_1$ 
instead of $K_0$, which leads to a second admissible family
$(J,D,J_2,I_2,J_1,I_1)$, where 
the roles of $(J_1,I_1)$ and $(J_2,I_2)$ are interchanged. 

\item 
We now prove that $g(J)>0$.
If $J_2\setminus I_2\neq\emptyset$, we get from the above that 
$g(K_0\setminus I_2)>0$, 
since the elements of $J_2\setminus I_2$ can arbitrarily 
be chosen from $K_0\setminus I_2$. Hence, in this case, 
$g(J)=g(K_0\setminus I_2)g(I_2)g(J_1)g(D)>0$.
On the other hand, if $J_2\setminus I_2=\emptyset$, then 
$\card{I_1}=\card{J_2\setminus I_2}=0$ and
$I_1=\emptyset$, that is 
$J_1\setminus I_1=J_1\neq\emptyset$. 
Using the second admissible family, we obtain
$g(K_1\setminus I_1)>0$ and  
$g(J)=g(K_1\setminus I_1)g(I_1)g(J_2)g(D)>0$.

\item 
Let us now show that $g(\set{n})>0$. 
If $J_1\setminus I_1\neq\emptyset$, we obtain  
that $g(\set{n}\setminus (K_0\cup D\cup I_1))>0$, since 
the elements of $J_1\setminus I_1$ can arbitrarily 
be chosen from $\set{n}\setminus (K_0\cup D\cup I_1)$. Hence 
$g(\set{n})=g(\set{n}\setminus 
(K_0\cup D\cup I_1))g(K_0)g(D)g(I_1)>0$. 
On the other hand, if $J_1\setminus I_1=\emptyset$, then 
$\card{I_2}=\card{J_1\setminus I_1}=0$ and $I_2=\emptyset$, 
that is $J_2\setminus I_2=J_2\neq\emptyset$. 
Analogously to the above, we get that
$g(\set{n}\setminus (K_1\cup D\cup I_2))>0$ and
$g(\set{n})=g(\set{n}\setminus (K_1\cup D\cup I_2))g(K_1)g(D)g(I_2)
>0$. 

\item  \label{r84876.e}
Using \eqref{e1543822} and that $g(\set{n})>0$, 
it is easily shown that, 
if $h(K)>0$ for a set $K\subseteq\set{n}$
with $\card{K}=\ell-m$, then 
$h((K\cup\{j_1\})\setminus\{j_2\})>0$ 
for all $j_1\in \set{n}\setminus K$ and $j_2\in K$.
By iterating this procedure, we obtain 
that $h(K)>0$ for all $K\subseteq\set{n}$ with $\card{K}=\ell-m$.

\item 
Let us now show that $g_1=\dots=g_n$. For this purpose, 
we drop the notation from Part~(\ref{r84876.b}) and 
consider new admissible families $(J,D,J_1,I_1,J_2,I_2)$. 

Let $j_1,j_2\in\set{n}$ be arbitrary with $j_1\neq j_2$, 
$J_1=\{j_1\}$, 
$J_2=\{j_2\}$, $D\subseteq\set{n}\setminus\{j_1,j_2\}$ with
$a:=\card{D}=m-1$, $K_2\subseteq\set{n}\setminus (D\cup J_1)$ with
$J_2\subseteq K_2$, $\card{K_2}=\ell-m$, 
and $J=K_2\cup J_1\cup D$.

If $I_1=\emptyset$ and $I_2=\{j_2\}$, then 
$\card{I_1}=0=\card{J_1}-\card{I_2}\leq n-\ell$ and \eqref{e1543822} 
gives
\begin{align*}
g_{j_2}h(J\setminus(D\cup J_1))
&=g(I_1)g(I_2)h(J\setminus(D\cup J_1))\\
&=g(J_1\setminus I_1)g(J_2\setminus I_2)h(J\setminus(D\cup J_2))
=g_{j_1}h(J\setminus(D\cup J_2)).
\end{align*}
If $I_1=\{j_1\}$ and $I_2=\emptyset$, then 
$\card{I_1}=1=\card{J_1}-\card{I_2}\leq n-\ell$ 
and, again, \eqref{e1543822} implies that
\begin{align*}
g_{j_1}h(J\setminus(D\cup J_1))
&=g(I_1)g(I_2)h(J\setminus(D\cup J_1))\\
&=g(J_1\setminus I_1)g(J_2\setminus I_2)h(J\setminus(D\cup J_2))
=g_{j_2}h(J\setminus(D\cup J_2)).
\end{align*}
Hence $\frac{g_{j_2}}{g_{j_1}}=\frac{g_{j_1}}{g_{j_2}}$, 
that is $g_{j_1}=g_{j_2}$.

\item 
Similarly as in Part (\ref{r84876.e}), it can now be shown  
that $h(K)=h(K')$ for all $K,K'\subseteq\set{n}$ with 
$\card{K}=\card{K'}=\ell-m$. \qedhere
\end{enumerate}
\end{proof}
%%%%%%%%%%%%%%%%%%%%%%%%%%%%%%%%%%%%%%%%%%%%%%%%%%%%%%%%%%%%%%%%%%%%%%
\section{Remaining proofs}\label{s7296}
%%%%%%%%%%%%%%%%%%%%%%%%%%%%%%%%%%%%%%%%%%%%%%%%%%%%%%%%%%%%%%%%%%%%%%
\begin{proof}[Proof of Lemma \ref{l487698}] 
For $m\in\Zpl^d$ with $\vecsum{m}=n$ and 
$s\in\set{d}^n$ with $\weight(s)=m$, we have 
\begin{align*}
x^m=\prod_{k=1}^dx_k^{\weight_k(s)} 
=\prod_{k=1}^d\Bigl(\prod_{j=1}^n x_k^{\bbone_{\{k\}}(s_j)}\Bigr)
=\prod_{j=1}^n x_{s(j)}.
\end{align*}
Hence
\begin{align*}
\sum_{\newatop{m\in\Zpl^d}{\vecsum{m}=n}}
\sum_{\newatop{s\in\set{d}^n}{\weight(s)=m}}
\Bigl(\prod_{j=1}^n z_{j,s(j)}\Bigr)x^m
&=\sum_{\newatop{m\in\Zpl^d}{\vecsum{m}=n}}
\sum_{\newatop{s\in\set{d}^n}{\weight(s)=m}}
\prod_{j=1}^n (z_{j,s(j)}x_{s(j)})\\
&=\sum_{k\in\set{d}^n}\prod_{j=1}^n (z_{j,k(j)}x_{k(j)})
=\prod_{j=1}^n \Bigl(\sum_{k=1}^dz_{j,k}x_k\Bigr),
\end{align*}
which shows the first equality. Further, it follows that, 
for arbitrary $r\in\set{n}_{\neq}^n$, 
\begin{align*}
\sum_{\newatop{s\in\set{d}^n}{\weight(s)=m}}\prod_{j=1}^n z_{j,s(j)}
&=\sum_{\newatop{s\in\set{d}^n}{\weight(s)=m}}
\prod_{j=1}^n z_{r^{-1}(j),s(j)}
=\sum_{\newatop{s\in\set{d}^n}{\weight(s)=m}}
\prod_{j=1}^n z_{j,s(r_j)}.
\end{align*}
For $s,t\in\set{d}^n$ with 
$\weight(s)=\weight(t)=m$, there exists 
$\widetilde{r}\in\set{n}_{\neq}^n$
so that $t=s\circ\widetilde{r}$, giving
\begin{align*}
\sum_{r\in\set{n}_{\neq}^n}\prod_{j=1}^n z_{j,s(r_j)}
=\sum_{r\in\set{n}_{\neq}^n}\prod_{j=1}^n z_{j,s(\widetilde{r}(r_j))}
=\sum_{r\in\set{n}_{\neq}^n}\prod_{j=1}^n z_{j,t(r_j)}
=\per(Z').
\end{align*}
It is easily seen  that  
$\sum_{{s\in\set{d}^n:\,}{\weight(s)=m}}1=\frac{n!}{m!}$.
The second equality now follows from 
\begin{align*}
\sum_{\newatop{s\in\set{d}^n}{\weight(s)=m}}\prod_{j=1}^n z_{j,s(j)}
&=\frac{1}{n!}\sum_{r\in\set{n}_{\neq}^n} 
\sum_{\newatop{s\in\set{d}^n}{\weight(s)=m}}
\prod_{j=1}^n z_{j,s(r_j)}\\
&=\frac{1}{n!}\sum_{\newatop{s\in\set{d}^n}{\weight(s)=m}}
\sum_{r\in\set{n}_{\neq}^n} \prod_{j=1}^n z_{j,s(r_j)}
=\frac{1}{m!}\per(Z'). \qedhere
\end{align*}
\end{proof}
%%%%%%%%%%%%%%%%%%%%%%%%%%%%%%%%%%%%%%%%%%%%%%%%%%%%%%%%%%%%%%%%%%%%%%

\begin{proof}[Proof of Theorem \ref{c873765}]
Let $t\in\set{d}^n$ with $\weight(t)=m$,
$Z'=(z_{j,t(r)})\in\CC^{n\times n}$. 
For $k\in \set{d}$, let 
$M_k=\{r\in\set{n}\,|\,t(r)=k\}$. Then $M_1,\dots,M_d\subseteq\set{n}$
are pairwise disjoint with $\bigcup_{k=1}^dM_k=\set{n}$. Further
$\card{M_k}=m_k$ for all $k\in\set{d}$.
Lemma \ref{l487698} and Corollary \ref{c639860} imply 
the assertion.  
\end{proof}
%%%%%%%%%%%%%%%%%%%%%%%%%%%%%%%%%%%%%%%%%%%%%%%%%%%%%%%%%%%%%%%%%%%%%%
\begin{lemma}\label{l2757}
Let $n\in\NN$, $Z=(z_{j,r})\in(\CC\setminus\{0\})^{n\times n}$, and
$\abs{Z}=(\abs{z_{j,r}})\in(0,\infty)^{n\times n}$. Then we have 
$\abs{\per(Z)}=\per(\abs{Z})$ if and only if there are numbers 
$\xi_j,\zeta_r\in\CC$ for all $j,r\in\set{n}$ 
such that $\abs{\xi_j}=\abs{\zeta_r}=1$ and
$z_{j,r}=\xi_{j}\zeta_r \abs{z_{j,r}}$ for all $j,r\in\set{n}$. 
\end{lemma}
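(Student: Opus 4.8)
The plan is to reduce the statement to the equality case of the triangle inequality. Writing the permanent in the form $\per(Z)=\sum_{r\in\set{n}_{\neq}^n}\prod_{j=1}^n z_{j,r_j}$, I would first record the elementary estimate $\abs{\per(Z)}=\Abs{\sum_{r\in\set{n}_{\neq}^n}\prod_{j=1}^n z_{j,r_j}}\leq\sum_{r\in\set{n}_{\neq}^n}\prod_{j=1}^n\abs{z_{j,r_j}}=\per(\abs{Z})$. Since every entry is nonzero, each diagonal product $\prod_{j=1}^n z_{j,r_j}$ is a nonzero complex number, so the classical equality condition for the triangle inequality shows that $\abs{\per(Z)}=\per(\abs{Z})$ holds if and only if all of these $n!$ diagonal products share one common argument.

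To exploit this, I would pass to the unimodular phases $\theta_{j,r}=z_{j,r}/\abs{z_{j,r}}$, which are well defined by the nonvanishing hypothesis. As $\prod_{j=1}^n z_{j,r_j}=(\prod_{j=1}^n\abs{z_{j,r_j}})\,\prod_{j=1}^n\theta_{j,r_j}$ with a strictly positive real first factor, the common-argument condition is equivalent to the assertion that the unimodular number $\prod_{j=1}^n\theta_{j,r_j}$ takes the same value for every permutation $r\in\set{n}_{\neq}^n$. The whole lemma thus becomes a statement about when the phase matrix $(\theta_{j,r})$ has constant diagonal products, and I would prove that this happens exactly when $\theta_{j,r}=\xi_j\zeta_r$ for suitable unimodular $\xi_j,\zeta_r$.

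The sufficiency direction is then immediate: if $z_{j,r}=\xi_j\zeta_r\abs{z_{j,r}}$ with $\abs{\xi_j}=\abs{\zeta_r}=1$, then $\theta_{j,r}=\xi_j\zeta_r$, and since $r$ runs through permutations, $\prod_{j=1}^n\theta_{j,r_j}=(\prod_{j=1}^n\xi_j)(\prod_{j=1}^n\zeta_{r_j})=(\prod_{j=1}^n\xi_j)(\prod_{s=1}^n\zeta_s)$ is independent of $r$; hence the common-argument condition holds and so does the desired equality.

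The necessity direction is where the actual work lies, and I expect the reconstruction of the product structure from the mere permutation-invariance of the phase products to be the main obstacle. Assuming $n\geq2$ (the case $n=1$ being trivial), the key device is to compare two permutations differing by a single transposition of values: for distinct rows $a,b$ and distinct columns $c,d$, pick any bijection from $\set{n}\setminus\{a,b\}$ onto $\set{n}\setminus\{c,d\}$ (possible since both have $n-2$ elements) and extend it once by $a\mapsto c,\ b\mapsto d$ and once by $a\mapsto d,\ b\mapsto c$. The two resulting permutations have equal phase products, and cancelling the common nonzero factors leaves $\theta_{a,c}\theta_{b,d}=\theta_{a,d}\theta_{b,c}$. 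With this $2\times2$ relation in hand I would set $\zeta_r=\theta_{1,r}$ and $\xi_j=\theta_{j,r}/\theta_{1,r}$; the relation applied with $b=1$ shows that $\xi_j$ does not depend on the chosen column $r$, so $\theta_{j,r}=\xi_j\zeta_r$ with $\abs{\xi_j}=\abs{\zeta_r}=1$, and therefore $z_{j,r}=\xi_j\zeta_r\abs{z_{j,r}}$, completing the proof.
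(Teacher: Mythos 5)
Your proposal is correct, and it is in fact more self-contained than the paper, whose entire proof of this lemma is the citation ``See \citet[page 8]{MR2275869}'': the statement is there established by Carlen, Lieb and Loss in the course of analysing the equality cases of their permanental Hadamard inequality, and the present paper does not reproduce the argument. Your route --- equality in the triangle inequality for the $n!$ nonzero diagonal products, reduction to the phase matrix $\theta_{j,r}=z_{j,r}/\abs{z_{j,r}}$, the quadrilateral relation $\theta_{a,c}\theta_{b,d}=\theta_{a,d}\theta_{b,c}$ obtained by comparing two permutations that differ by a transposition, and the normalization $\zeta_r=\theta_{1,r}$, $\xi_j=\theta_{j,r}/\theta_{1,r}$ --- is the natural elementary argument, and every step checks out: since all entries of $Z$ are nonzero, $\per(\abs{Z})>0$, so equality forces all diagonal products onto a common ray; the shared factor $\prod_{j\notin\{a,b\}}\theta_{j,\sigma(j)}$ is unimodular and cancels cleanly; and the relation with $(a,b)=(j,1)$ and columns $r,r'$ gives $\theta_{j,r}/\theta_{1,r}=\theta_{j,r'}/\theta_{1,r'}$, so $\xi_j$ is well defined, with $\abs{\xi_j}=\abs{\zeta_r}=1$ automatic and the case $n=1$ trivial as you note (take $\xi_1=\theta_{1,1}$, $\zeta_1=1$). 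The sufficiency computation, in which $\prod_{j=1}^n\theta_{j,r_j}=\bigl(\prod_{j=1}^n\xi_j\bigr)\bigl(\prod_{s=1}^n\zeta_s\bigr)$ is permutation-independent, is likewise complete. What the citation buys the paper is brevity; what your argument buys is independence from the reference and the explicit observation that the equality characterization requires nothing beyond the triangle inequality together with a rank-one factorization of unimodular matrices having constant diagonal products.
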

%%%%%%%%%%%%%%%%%%%%%%%%%%%%%%%%%%%%%%%%%%%%%%%%%%%%%%%%%%%%%%%%%%%%%%
\begin{proof}
See \citet[page 8]{MR2275869}.
\end{proof}

%%%%%%%%%%%%%%%%%%%%%%%%%%%%%%%%%%%%%%%%%%%%%%%%%%%%%%%%%%%%%%%%%%%%%%
\begin{proof}[Proof of the necessity of the condition
given in  Example \ref{ex167598}(\ref{ex167598.b}),(iii)] 
\rule{0pt}{0pt} \\
Let us assume that $e$, $f$, $g$, and $h$ are all non-zero vectors. 
Inequality \eqref{eq357865} can be shown by using  
Example \ref{ex167598}(\ref{ex167598.a}) applied to the transpose of 
the matrix under consideration and the Cauchy-Schwarz inequality: 
\begin{align} 
\abs{\per(Z)}^2
&=\abs{z_{1,3}\per(Z[\{2,3\},\set{2}])
+z_{2,3}\per(Z[\{1,3\},\set{2}])
+z_{3,3}\per(Z[\set{2},\set{2}])}^2\nonumber\\
&\leq\bigl(\abs{z_{1,3}\per(Z[\{2,3\},\set{2}])}
+\abs{z_{2,3}\per(Z[\{1,3\},\set{2}])}
+\abs{z_{3,3}\per(Z[\set{2},\set{2}])}\bigr)^2\nonumber\\
&\leq\bigl(\abs{z_{1,3}}\norm{f}\norm{g}
+\abs{z_{2,3}}\norm{e}\norm{g}
+\abs{z_{3,3}}\norm{e}\norm{f}\bigr)^2\label{eq3645764}\\
&\leq (\norm{e}^2\norm{f}^2+\norm{e}^2\norm{g}^2+\norm{f}^2\norm{g}^2)
\norm{h}^2.\label{eq3645765}
\end{align}
We assume that, in \eqref{eq357865}, equality holds. Therefore, in 
the chain above, equality holds. In particular, we have
$\abs{z_{1,1}z_{2,1}}=\abs{z_{1,2}z_{2,2}}$,
$\abs{z_{1,1}z_{3,1}}=\abs{z_{1,2}z_{3,2}}$, and
$\abs{z_{2,1}z_{3,1}}=\abs{z_{2,2}z_{3,2}}$.

Let us verify that $Z\in(\CC\setminus\{0\})^{3\times 3}$.  
We first show that $z_{1,1}\neq0$. Suppose that $z_{1,1}=0$. 
Since $e\neq 0$, we have $z_{1,2}\neq0$ and, in turn, 
$z_{2,2}=0$, because $0=\abs{z_{1,1}z_{2,1}}=\abs{z_{1,2}z_{2,2}}$. 
Since $f\neq 0$, we have $z_{2,1}\neq0$ and, in turn, 
$z_{3,1}=0$, because $\abs{z_{2,1}z_{3,1}}=\abs{z_{2,2}z_{3,2}}=0$. 
But from $g\neq0$, it follows that $z_{3,2}\neq 0$, that is
$0=\abs{z_{1,1}z_{3,1}}=\abs{z_{1,2}z_{3,2}}\neq0$, which is a 
contradiction. Hence $z_{1,1}\neq 0$. Similarly, one can show that 
$z_{2,1},z_{3,1},z_{1,2},z_{2,2},z_{3,2},$ are all non-zero.  
Since $h\neq0$ and, in \eqref{eq3645765}, equality holds, 
a number $x\in(0,\infty)$ exists such that
\begin{align}\label{e782566}
(\abs{z_{1,3}},\abs{z_{2,3}},\abs{z_{3,3}})
=\frac{x}{2}(\norm{f}\norm{g},\norm{e}\norm{g},\norm{e}\norm{f}).
\end{align}
The numbers $z_{1,3},z_{2,3},z_{3,3}$ are non-zero, because
$e$, $f$, and $g$ are non-zero.  
Therefore, we have $Z\in(\CC\setminus\{0\})^{3\times 3}$. 

From the equality in \eqref{eq357865}, it follows that 
$\abs{\per(Z)}=\per(\abs{Z})$ and hence 
there are numbers $\xi_j,\zeta_r\in\CC$ for $j,r\in\set{3}$
such that $\abs{\xi_j}=\abs{\zeta_r}=1$ and
$z_{j,r}=\xi_{j}\zeta_r \abs{z_{j,r}}$ for all $j,r\in\set{3}$, see 
Lemma \ref{l2757}. Without loss of generality, we may now assume that 
$Z\in(0,\infty)^{3\times 3}$. 

Since in \eqref{eq3645764} equality holds, we have
$z_{1,1}z_{2,1}=z_{1,2}z_{2,2}$,
$z_{1,1}z_{3,1}=z_{1,2}z_{3,2}$, and
$z_{2,1}z_{3,1}=z_{2,2}z_{3,2}$.
In particular, 
$(z_{1,1}z_{2,1})(z_{1,2}z_{3,2})
=(z_{1,2}z_{2,2})(z_{1,1}z_{3,1})$,
i.e.\ $z_{2,1}z_{3,2}=z_{3,1}z_{2,2}$. 
Similarly, we get that
$z_{1,1}z_{3,2}=z_{3,1}z_{1,2}$
and $z_{2,1}z_{1,2}=z_{1,1}z_{2,2}$. 
Therefore 
$(z_{1,1}z_{3,1})(z_{3,1}z_{1,2})=(z_{1,2}z_{3,2})(z_{1,1}z_{3,2})$, 
i.e.\ $z_{3,1}=z_{3,2}$. 
Hence
$z_{2,1}z_{3,2}=z_{2,1}z_{3,1}=z_{2,2}z_{3,2}$, i.e.\ 
$z_{2,1}=z_{2,2}$. Further
$z_{1,1}z_{2,2}=z_{1,1}z_{2,1}=z_{1,2}z_{2,2}$, i.e.\ 
$z_{1,1}=z_{1,2}$.
Consequently, $z_{j,1}=z_{j,2}$ for all $j\in\set{3}$.

In particular, $\norm{e}^2=2z_{1,1}^2$, $\norm{f}^2=2z_{2,1}^2$, and
$\norm{g}=2z_{3,1}^2$. Now, \eqref{e782566} implies that 
$z_{1,3}=\frac{x}{2}\norm{f}\norm{g}=xz_{2,1}z_{3,1}$, 
$z_{2,3}=\frac{x}{2}\norm{e}\norm{g}=xz_{1,1}z_{3,1}$, and
$z_{3,3}=\frac{x}{2}\norm{e}\norm{f}=xz_{1,1}z_{2,1}$.
Hence $z_{j,3}=x\prod_{i\in\set{3}\setminus\{j\}}z_{i,1}$
for all $j\in\set{3}$.
\end{proof}

%%%%%%%%%%%%%%%%%%%%%%%%%%%%%%%%%%%%%%%%%%%%%%%%%%%%%%%%%%%%%%%%%%%%%%
\section*{Acknowledgments}
\noindent
The author thanks the reviewer for his remarks, which helped to 
improve a previous version of this paper. 

%%%%%%%%%%%%%%%%%%%%%%%%%%%%%%%%%%%%%%%%%%%%%%%%%%%%%%%%%%%%%%%%%%%%%%
\small 
%\raggedbottom % inserted to avoid Underfull \vbox (badness 10000)
\linespread{1}\bibsep7pt
\selectfont
\bibliographystyle{elsarticle-harv} 
\bibliography{ghti_72_rev_15}

%%%%%%%%%%%%%%%%%%%%%%%%%%%%%%%%%%%%%%%%%%%%%%%%%%%%%%%%%%%%%%%%%%%%%%
\end{document}